\theoremstyle{definition} 
\newtheorem{definition}{Definition}[section]
\theoremstyle{plain} 
\newtheorem{proposition}[definition]{Proposition}
\newtheorem{lemma}[definition]{Lemma}
\newtheorem{theorem}[definition]{Theorem} 
\newtheorem*{NNthm}{Theorem} 
\newtheorem{convention}[definition]{Convention}
\theoremstyle{remark} 
\newtheorem{remark}[definition]{Remark}
\newtheorem{example}[definition]{Example}
\newtheorem*{Reader's guide}{Reader 's guide}
\newcommand{\ZZ}{\mathbb{Z}/2\mathbb{Z}}
\newcommand{\ha}[1]{\overset{\rightharpoonup}{{#1}}}
\title[Strata of meromorphic differentials]{Connected components of the strata of the moduli space of meromorphic differentials}
\author{Corentin Boissy}
\address{Aix Marseille Université, CNRS, Centrale Marseille, I2M, UMR 7373, 13453 Marseille France}
\email{corentin.boissy@univ-amu.fr}
\subjclass[2010]{Primary: 32G15. Secondary: 30F30, 57R30}
\keywords{Abelian differentials,  meromorphic differentials, moduli spaces, translation surfaces}
\date{\today}
\begin{document}
\begin{abstract}
We study the translation surfaces corresponding to meromorphic differentials on compact Riemann surfaces. Such geometric structures naturally appear when studying compactifications of the strata of the moduli space of Abelian differentials.

We compute the number of connected components of the strata  of the moduli space of meromorphic differentials. We show that in genus greater than or equal to two, one has up to three components with a similar description as the one of Kontsevich and Zorich for the moduli space of Abelian differentials. In genus one, one can obtain an arbitrarily large number of connected components that are distinghished by a simple topological invariant.
\end{abstract}

\maketitle

\setcounter{tocdepth}{1}
\tableofcontents

\section{Introduction}
A nonzero holomorphic one-form ({Abelian differential}) on a compact Riemann surface naturally defines a flat metric with conical singularities on this surface. Geometry and dynamics on such flat surfaces, in relation to geometry and dynamics on the corresponding moduli space of Abelian differentials is a very rich topic and has been widely studied in the last 30 years. It is related to interval exchange transformations, billards in polygons, Teichmüller dynamics. 

A noncompact translation surface corresponds to a a one form on a noncompact Riemann surface. The dynamics and geometry on some special cases of noncompact translation surfaces have been studied more recently. For instance, dynamics on $\mathbb{Z}^d$ covers of compact translation surfaces (see \cite{HW,HLT, DHL}), infinite square tiled surfaces (see \cite{Hubert:Schmithuesen}), or general noncompact surfaces (see \cite{Bowman, BV,PSV}).

In this paper, we investigate the case of translation surfaces that come from meromorphic differentials defined on compact Riemann surfaces. In this case, we obtain infinite area surfaces, with ``finite complexity''. 
Dynamics of the geodesic flow on a generic direction on such surface is trivial any infinite orbit converges to the poles. Also, $SL_2(\mathbb{R})$ action doesn't seem as rich as in the Abelian case (see Appendix~A).

 However, it turns out that such structures naturaly appear when studying compactifications of strata of the moduli space of Abelian differentials. Eskin, Kontsevich and Zorich show in a recent paper \cite{EKZ}
 that when a sequence of Abelian differentials $(X_i,\omega_i)$ converges to a boundary point in the Deligne-Munford compactification, then subsets $(Y_{i,j},\omega_{i,j})$ corresponding to thick components of the $X_i$, after suitable rescaling converge to meromorphic differentials (see \cite{EKZ}, Theorem~10). Smillie, in a work to appear, constructs a geometric compactification of the strata of the moduli space of Abelian differentials, by using only flat geometry, and where flat structures defined by meromorphic differentials are needed. 
 
 The connected components of the moduli space of Abelian differentials were described by Kontsevich and Zorich in \cite{KoZo}. They showed that each stratum has up to three connected component, which are described by two invariants: hyperellipticity and the parity of the spin structure, that arise under some conditions on the set of zeroes.
 Later, Lanneau has described the connected components of the moduli space of quadratic differentials. 
 The main goal of the paper is to describe connected components of the moduli space of meromorphic differentials with prescribed poles and zeroes. It is well known that each stratum of the moduli space of genus zero meromorphic differentials is connected. We show that when the genus is greater than, or equal to two, there is an analogous classification as the one of Kontsevich and Zorich, while in genus one, there can be an arbitrarily large number of connected components.

In this paper, we will call \emph{translation surface with poles} a translation surface that comes from a meromorphic differential on a punctured Riemann surface, where poles correspond to the punctured points. We describe in Section~\ref{sec:flat:merom} the local models for neighborhoods of poles.  Similarly to the Abelian case, we  denote by $\mathcal{H}(n_1,\dots ,n_r,-p_1,\dots ,-p_s)$ the moduli space of translation surfaces with poles that corresponds to meromorphic differentials with zeroes of degree $n_1,\dots ,n_r$ and poles of degree $p_1,\dots ,p_s$. 
It will be called \emph{stratum of the moduli space of meromorphic differentials}. We will always assume that $s>0$. A strata is nonempty as soon as $\sum_{i} n_i-\sum_j p_j=2g-2$, for some nonnegative integer $g$ and $\sum_{j}p_j>1$.

For a genus one translation surface $S$ with poles, we describe the connected components by using a geometric invariant easily computable in terms of the flat metric, that we call the \emph{rotation number of a surface}. As we will see in Section~\ref{genus1}, in the stratum $\mathcal{H}(n_1,\dots ,n_r,-p_1,\dots ,-p_s)$, the rotation number is a positive integer that divides all the $n_i,p_j$.

\begin{theorem}\label{MT1}
Let $\mathcal{H}(n_1,\dots ,n_r,-p_1,\dots ,-p_s)$, with $n_i,p_j>0$ and   $\sum_{j}p_j>1$ be a stratum of genus one meromorphic differentials. Denote by $c$ be the number of positive divisors of $\gcd(n_1,\dots ,n_r,p_1,\dots ,p_s)$. The number of connected components of the stratum is:
\begin{itemize}
\item $c-1$ if $r=s=1$.  In this case $n_1=p_1=\gcd(n_1,p_1)$ and each connected component corresponds to a a rotation number that divides $n_1$ and is not $n_1$.
\item $c$ otherwise.  In this case each connected component corresponds to a rotation number that divides $\gcd(n_1,\dots ,n_r, p_1,\dots ,p_s)$.
\end{itemize}
\end{theorem}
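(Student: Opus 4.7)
The plan is to establish, first, that the rotation number is a well-defined homotopy invariant that is locally constant on the stratum, and then to show that it is a complete invariant by constructing explicit surfaces for each allowed value and connecting arbitrary surfaces to these models via surgeries.

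\textbf{Step 1 (Invariance).} I would verify that the rotation number attached to a simple closed curve avoiding the singularities depends only on its free homotopy class in the punctured surface, and that the definition (presumably the minimum over generators of a suitable subgroup of $H_1$, or equivalently the index of a shortest simple closed geodesic) is continuous with respect to deformations in period coordinates, hence locally constant. The divisibility statement (that the rotation number divides each $n_i$ and $p_j$) should come from Gauss--Bonnet applied to a small loop around each singularity: such a loop has rotation $n_i+1$ (resp. $-p_j+1$) times the tangent turning, and homological relations in the punctured torus force a common divisor.

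\textbf{Step 2 (Realizability).} For each positive divisor $k$ of $\gcd(n_1,\dots,n_r,p_1,\dots,p_s)$, I would build an explicit model in the stratum with rotation number $k$. A natural construction is a $k$-fold cyclic cover of a genus one surface in the stratum $\mathcal{H}(n_1/k,\dots,n_r/k,-p_1/k,\dots,-p_s/k)$ branched so that the covering is a translation cover and the rotation number multiplies by $k$. Alternatively, one glues a polygon whose side identifications force a prescribed holonomy of the tangent field along the generating loop. In the special case $r=s=1$, I would show directly that rotation number $k = n_1$ cannot be realized: such a surface would have to be obtained by cyclically gluing $n_1$ copies of a rotation-number-one piece in $\mathcal{H}(1,-1)$, but $\mathcal{H}(1,-1)$ is empty (the condition $\sum p_j > 1$ is violated), giving the obstruction.

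\textbf{Step 3 (Completeness).} This is the main content. Given two surfaces $S_1, S_2$ in the same stratum with the same rotation number, I would connect them by a path through the following reductions:
\begin{enumerate}
\item[(a)] Use local surgeries that break up or merge zeros and merge poles of the same order. Such surgeries, when performed inside a small disk, do not affect the rotation number computed along a curve disjoint from that disk.
\item[(b)] Reduce to a normal form with, say, a single zero and a single pole whenever possible, inside a simpler stratum $\mathcal{H}(N,-N)$; more generally to $\mathcal{H}(N,n,-N-n)$ or similar, depending on how zeros and poles merge.
\item[(c)] In this reduced stratum, exhibit for each rotation number a standard polygonal model (an explicit cylinder-plus-polar-domain decomposition) and show, using direct arguments in period coordinates together with $SL_2(\mathbb{R})$-deformations, that every surface with that rotation number is path-connected to this model.
\end{enumerate}

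\textbf{Step 4 (Counting).} Combining Steps 1--3, the map from components to divisors of $\gcd(n_1,\dots,p_s)$ given by the rotation number is a bijection, except in the case $r=s=1$ where the top divisor is excluded, yielding $c$ and $c-1$ components respectively.

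The hard part will be Step 3(c): controlling the rotation number during surgeries and establishing the normal form. The delicate point is that merging two zeros (or splitting one) involves a choice of direction, and one must show either that different choices give paths in the same component, or that the rotation number is preserved regardless. I expect the special obstruction for $r=s=1$ to appear naturally in Step 2 as the nonexistence of the base of a cyclic cover, which is the cleanest way to see why exactly one divisor is lost in that case.
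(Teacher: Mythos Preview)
Your approach differs substantially from the paper's, and the difference is exactly at the hard step you flag. The paper does not attempt your Step~3 at all: instead of connecting two surfaces with the same rotation number by explicit paths and normal forms, it counts the components by an independent algebraic argument and then matches that count against the number of rotation numbers realized. Concretely, by Abel's theorem a meromorphic differential on a fixed torus $\mathbb{C}/(v_1\mathbb{Z}+v_2\mathbb{Z})$ with prescribed multiplicities is determined (up to scalar) by the positions of its zeros and poles subject to one linear relation $\sum n_i z_i-\sum p_j z_j'\in \Gamma$; the projectivized stratum is therefore a covering of $\mathcal{M}_1$, and its components are the orbits of the monodromy. That monodromy reduces to the $SL_2(\mathbb{Z})$--action $(p,q)\mapsto(p+q,q),(p,q+p)$ on integer pairs taken modulo $d=\gcd(n_i,p_j)$, whose orbits are indexed by $\gcd(p,q,d)$ via a short Euclidean-algorithm argument. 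The exclusion of the top divisor when $r=s=1$ appears simply as the condition $(p,q)\neq(0,0)$ (zero and pole must be distinct points), not as the emptiness of $\mathcal{H}(1,-1)$. Once this count is in hand, the paper only needs your Step~1 (invariance of the rotation number) and your Step~2 (realizability), and since the two counts agree the invariant is automatically complete --- Step~3(c) is never needed. Your route would give a purely flat-geometric proof, which is conceptually appealing, but the paper's covering argument is dramatically shorter.

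There is also a genuine gap in your Step~2: the $k$-fold cyclic cover construction does not produce what you want. An unramified $k$-cover of a torus is again a torus, but each order-$n_i/k$ zero acquires $k$ distinct preimages, each still of order $n_i/k$, so you land in the wrong stratum; a branched cover would fix the orders but raise the genus above one. The paper's realization is instead by bubbling a handle with parameter $k$ on a genus-zero surface in $\mathcal{H}(n-2,-p_1,\dots,-p_s)$, for which the indices along the core curve and its dual are directly computable and give rotation number $\gcd(k,p_1,\dots,p_s)$.
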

A consequence of the previous theorem is that, contrary to the case of Abelian differentials, there can be an arbitrarily large number of connected components for a stratum of meromorphic differentials (in genus~1). For instance, the stratum $\mathcal{H}(24,-24)$ has 7 connected components since the positive divisors of 24 that are not 24 are $1,2,3,4,6,8$ and $12$.

The general classification uses analogous criteria as for Abelian differentials. We recall that in this case, the connected components are distinguished by the following (up to a few exception in low genera):
\begin{itemize}
\item \emph{hyperellipticity}: if there is only one singularity or two singularities of equal degree, there is a component that consists only of hyperelliptic Riemann surfaces. For each translation surface, the hyperelliptic involution is an isometry. Slightly abusing with terminology, we usually call this component the \emph{hyperelliptic component}.
\item \emph{the parity of the spin structure}: If all singularities are of even degree, there are two  connected component (none of which is the hyperelliptic component) distinguished by a topological invariant easily computable in terms of the flat metric.
\end{itemize}

In Section~\ref{invariants}, we define in our context the notion of hyperelliptic component and spin structure.

In the next theorem, we say that the set of poles and zeroes is:
\begin{itemize}
\item of \emph{hyperelliptic type} if the degree of zeroes are or the kind $\{2n\}$ or $\{n,n\}$, for some positive integer $n$, and if the degree of the poles are of the kind $\{-2p\}$ or $\{-p,-p\}$, for some positive integer $p$.
\item of \emph{even type} if the degrees of zeroes are all even, and if the degrees of the poles are either all even, or are $\{-1,-1\}$.
\end{itemize}

\begin{theorem}\label{MT2}
Let $\mathcal{H}=\mathcal{H}(n_1,\dots ,n_r,-p_1,\dots ,-p_s)$, with $n_i,p_j>0$ be a stratum of genus $g\geq 2$ meromorphic differentials. We have the following.
\begin{enumerate}
\item If $\sum_{i} p_i$ is odd and greater than two, then $\mathcal{H}$ is nonempty and connected.
\item If $\sum_i p_i=2$ and $g=2$, then:
\begin{itemize}
\item if the set of poles and zeroes is of hyperelliptic type, then there are two connected components, one hyperelliptic, the other not (in this case, these two components are also distinghished by the parity of the spin structure)
\item otherwise, the stratum is connected.
\end{itemize}
\item If $\sum_i p_i>2$ or if $g>2$, then:
\begin{itemize}
\item  if the set of poles and zeroes is of hyperelliptic type, there is exactly one hyperelliptic connected component, and one or two nonhyperelliptic components that are discribed below. Otherwise, there is no hyperelliptic component.
\item if the set of poles and zeroes is of even type, then $\mathcal{H}$ contains exactly two nonhyperelliptic connected components that are distinguished by the parity of the spin structure. Otherwise $\mathcal{H}$ contains exactly one nonhyperelliptic component.
\end{itemize}
\end{enumerate}
\end{theorem}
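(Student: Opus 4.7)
The plan is to follow the strategy that Kontsevich and Zorich developed for Abelian differentials, adapted to the meromorphic setting. The argument splits naturally into two halves. First, one must verify that hyperellipticity and the parity of spin structure (defined in Section~\ref{invariants}) are constant on connected components: hyperellipticity is both an open and a closed condition under small deformations of the flat metric, while the spin parity is a $\Z/2$-valued invariant whose definition in Section~\ref{invariants} makes its local constancy transparent. In particular this also accounts for case~(1) of the theorem, where $\sum p_i$ is odd and greater than two: the signature prevents both hyperelliptic type and even type, so no invariant other than the stratum itself can separate components, and one is only left to prove connectedness.

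The converse direction, that any two surfaces sharing these invariants lie in a single component, I would prove by induction on the pair $(g,r)$, reducing the stratum to a suitable minimal one by three local cut-and-paste surgeries. The first is the breaking (or merging) of a zero of order $n_1+n_2$ into two zeros of orders $n_1,n_2$ along a short saddle connection, in the spirit of Eskin-Masur-Zorich. The second is an analogous breaking (or merging) of a pole, including the special move that fuses two simple poles of opposite residues into a regular point; this is what forces $\{-1,-1\}$ to behave as an even-type configuration. The third is bubbling a handle at a zero, which raises the genus by one without creating new singularities. Each move deforms continuously inside the ambient moduli space and, combined with its inverse, produces identifications of connected components between strata of different signatures.

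Iterating these surgeries reduces the problem to a small collection of base cases: minimal strata with $r=1$, treated by explicit cylinder decompositions; two-zero strata of hyperelliptic type, where the hyperelliptic locus is realised as the image of a double cover of a genus zero stratum; and the low-complexity range $g=2$ with $\sum p_i=2$, which must be handled by direct enumeration of the few relevant strata (there the hyperelliptic and even-spin loci coincide, producing case~(2)). The genus one result, Theorem~\ref{MT1}, also serves as a base of the induction whenever a surgery drops the genus to one, and the classical Kontsevich-Zorich classification is the base reached after all poles have been removed by the pole-merging surgery.

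The main obstacle, as in the Abelian case, is to track the spin parity through the breaking surgeries. One must check that splitting a zero of even order into two zeros of odd orders does not split a connected component, while a split into two even orders must realise both possible parity combinations within each nonhyperelliptic component. The analogous analysis has to be carried out for pole splittings, and especially for the creation of a pair $\{-1,-1\}$ at a regular point, where the spin computation is what justifies grouping this signature with the even type. Once these parity computations are secured, combining the Kontsevich-Zorich theorem with the inventory of surgery moves above yields the classification in the exact form stated.
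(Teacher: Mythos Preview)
Your outline has the right overall shape (invariants give lower bounds, surgeries give upper bounds), but the specific reduction you propose diverges from the paper and contains a genuine gap. You want to merge \emph{poles} as well as zeros, eventually stripping all poles and landing in the Abelian Kontsevich--Zorich classification. No such surgery exists in the paper, and it is not clear one can be defined: the local model near a pole has infinite area, so there is no analogue of the Eskin--Masur--Zorich disk-regluing that lets two poles ``collide'' continuously inside a fixed stratum. The move you mention for $\{-1,-1\}$ (gluing the two cylinders) is used in Section~\ref{spin:poles:simples} only to \emph{define} the spin invariant, not as a deformation within the moduli space; it jumps to an Abelian stratum and does not give a path in $\mathcal{H}$.

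The paper's route is different and stays entirely on the meromorphic side. One collapses only the \emph{zeros} (Proposition~\ref{adj:minimal}, Proposition~\ref{prop:upper:bound}) to bound the number of components of $\mathcal{H}$ by that of the minimal stratum $\mathcal{H}_{min}=\mathcal{H}(\sum n_i,-p_1,\dots,-p_s)$. The components of $\mathcal{H}_{min}$ are then pinned down (Theorem~\ref{th:str:min}) via bubbling handles and Lemma~\ref{lemme:lanneau}, with the crucial extra input being the \emph{genus one} classification: Proposition~\ref{g1:cyl} translates rotation numbers into identities among bubbling parameters, which is what kills the would-be extra components when the poles are not of even or hyperelliptic type (Proposition~\ref{min:strat:odd:poles:or:non:hyp}). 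Finally, to show that breaking a zero into the ``wrong'' parities or unequal parts does not create new components, the paper builds explicit paths in two-zero strata (Proposition~\ref{join:cc}) by grafting a genus one piece in $\mathcal{H}(n,-n)$ onto a genus $g-1$ surface and moving within the connected genus one stratum. This replaces the parity-tracking computation you allude to, and it is the step your sketch is missing a concrete mechanism for.
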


From the previous theorem, we see that there are at most three connected component in genus greater than or equal to two. For instance, the stratum $\mathcal{H}(4,4,-1,-1)$ contains a hyperelliptic connected component (zeroes and poles are of hyperelliptic type) and two nonhyperelliptic components (the zeroes are even and the poles are $\{-1,-1\}$). So it has three components. The stratum $\mathcal{H}(2,4,-1,-1,-2)$ is connected, since it does not have a hyperelliptic connected component and the poles and zeroes are not of even type. 

The structure of the paper is the following:
\begin{itemize}
\item In Section~\ref{sec:flat:merom}, we describe general facts about the metric defined by a meromorphic differential and define a topology on the moduli space.
\item In Section~\ref{sec:tools}, we present three tools that are needed in the proof. The first two ones appear already in the paper of Kontsevich and Zorich, and in the paper of Lanneau. The third one is a version of the well known Veech construction for the case of translation surfaces with poles.
\item In Section~\ref{genus1}, we describe the connected components in the genus one case. Some of the results in genus one will be very useful for the general genus.
\item In Section \ref{invariants}, we describe the topological invariants for the general genus case, \emph{i.e.} hyperelliptic connected components and the  parity of the spin structure.
\item In Section \ref{sec:minimal}, we compute the connected components for the minimal strata, which are the strata with only one conical singularity (and possibly several poles).
\item In Section \ref{sec:nonminimal}, we compute the connected components for the general case.
\end{itemize}

\subsubsection*{Acknowledgments}
I thank Martin Moeller for many discussions about meromorphic differentials and about spin structures. I thank John Smillie for motivating the work on this paper and interesting discussions. I am also thankful to Pascal Hubert and Erwan Lanneau for the frequent discussions during the development of this paper. This work is partially suported by the ANR Project "GeoDym".

\section{Flat structures defined by meromorphic differentials.}\label{sec:flat:merom}

\subsection{Holomorphic one forms and flat structures}
Let $X$ be a Riemann surface and let $\omega$ be a holomorphic one form. For each $z_0\in X$ such that $\omega(z_0)\neq 0$, integrating $\omega$ in a neighborhood of $z_0$ gives local coordinates whose corresponding transition functions are translations, and therefore $X$ inherits a flat metric, on $X\backslash \Sigma$, where $\Sigma$ is the set of zeroes of $\omega$.

In a neighborhood of an element of $\Sigma$, such metric admits a conical singularity of angle $(k+1)2\pi$, where $k$ is the degree of the corresponding zero of $\omega$. Indeed, a zero of degree $k$ is given locally, in suitable coordinates by $\omega=(k+1)z^k dz$. This form is precisely the preimage of the constant form $dz$ by the ramified covering $z\to z^{k+1}$. In terms of flat metric, it means that the flat metric defined locally by a zero of order $k$ appear as a connected covering of order $k+1$ over a flat disk, ramified at zero.

When $X$ is compact, the pair $(X,\omega)$, seen as a smooth surface with such translation atlas and conical singularities, is usually called a \emph{translation surface}.

If $\omega$ is a meromorphic differential on a compact Riemann $\overline{X}$, we can consider the translation atlas defined defined by $\omega$ on $X=\overline{X}\backslash \Sigma'$, where $\Sigma'$ is the set of poles of $\omega$. We obtain a translation surface with infinite area. We will call such surface \emph{translation surface with poles}. 

\begin{convention}
When speaking of a translation surface with poles $S=(X,\omega)$. The surface $S$ equipped with the flat metric is noncompact. The underlying Riemann surface $X$ is a punctured surface and $\omega$ is a holomorphic one-form on $X$. The corresponding closed Riemann surface is denoted by $\overline{X}$, and $\omega$ extends to a meromorphic differential on $\overline{X}$ whose set of poles is precisely $\overline{X}\backslash X$.
\end{convention}

Similarly to the case of Abelian differentials. A \emph{saddle connection} is a geodesic segment that joins two conical singularities (or a conical singularity to itself) with no conical singularities on its interior. 

We also recall that it is well known that $\sum_{i=1}^r n_i-\sum_{j=1}^s p_j=2g-2$, where $\{n_1,\dots ,n_r\}$ is the set (with multiplicities) of degree of zeroes of $\omega$ and $\{p_1,\dots ,p_s\}$ is the set (with multiplicities) of degree of the poles of $\omega$. 

\subsection{Local model for poles}
 The neighborhood of a pole of order one is an infinite cylinder with one end. Indeed, up to rescaling, the pole is given in local coordinates by $\omega=\frac{1}{z}dz$. Writing $z=e^{z'}$, we have $\omega=dz'$, and $z'$ is in a infinite cylinder.

Now we describe the flat metric in a neighborhood of a pole of order $k\geq 2$ (see also \cite{strebel}).
First, consider the meromorphic 1-form on $\mathbb{C}\cup \{\infty \}$ defined on $\mathbb{C}$ by $\omega=z^k dz$. Changing coordinates $w=1/z$, we see that this form has a pole $P$ of order $k+2$ at $\infty $, with zero residue. In terms of translation structure, a neighborhood of the pole is obtained by taking an infinite cone of angle $(k+1)2\pi$ and removing a compact neighborhood of the conical singularity. Since the residue is the only local invariant for a pole of order k, this gives a local model for a pole with zero residue.

Now, define $U_R=\{z\in \mathbb{C}| |z|>R\}$ equipped with the standard flat metric.
Let $V_R$  be the Riemann surface obtained after removing from $U_R$ the ${\pi}$--neighborhood of the real half line $\mathbb{R}^-$, and identifying by the translation $z\to z+\imath 2\pi$ the lines $-\imath {\pi}+\mathbb{R}^-$ and $\imath {\pi}+\mathbb{R}^-$. The surface $V_R$ is naturally equipped with a holomorphic one form  $\omega$ coming from $dz$ on $V_R$. We claim that this one form has a pole of order 2 at infinity and residue -1.
Indeed, start from the one form on $U_{R'}$ defined by $(1+1/z)dz$ and integrate it. Choosing the usual determination of $\ln(z)$ on $\mathbb{C}\backslash \mathbb{R}^-$, 
one gets the map $z\to z+\ln(z)$ from $U_{R'}\backslash \mathbb{R}^-$ to $\mathbb{C}$, which extends to a injective holomorphic map $f$ from $U_{R'}$ to $V_R$, if $R'$ is large enough. Furthermore, the pullback of the form $\omega$ on $V_R$ gives $(1+1/z)dz$. Then, the claim follows easily after the change of coordinate $w=1/z$

Let $k\geq 2$. The pullback of the form $(1+1/z)dz$ by the map $z\to z^{k-1}$ gives $((k-1)z^{k-2}+(k-1)/z)dz$, \emph{i.e.} we get at infinity a pole of order $k$ with residue $-(k-1)$. In terms of flat metric, a neighborhood of a pole of order $k$ and residue $-(k-1)$ is just the natural cyclic $(k-1)$--covering of $V_R$. Then, suitable rotating and rescaling gives the local model for a pole of order $k$ with a nonzero residue.

For flat geometry, it will be convenient to forget the term $2\imath \pi$ when speaking of residue, hence we define the \emph{flat residue} of a pole $P$ to be $\int_{\gamma_P} \omega$, where $\gamma_P$ is a small closed path that turns around a pole counterclokwise.

\subsection{Moduli space}

If $(X,\omega)$ and $(X',\omega')$ are such that there is a biholomorphism $f:X\to X'$ with $f^* \omega'=\omega$, then $f$ is an isometry for the metrics defined by $\omega$ and $\omega'$. Even more, for the local coordinates defined by $\omega,\omega'$, the map $f$ is in fact a translation. 

As in the case of Abelian differentials, we consider the moduli space of meromorphic differentials, where $(X,\omega)\sim (X',\omega')$ if there is a biholomorphism $f:X\to X'$ such that $f^* \omega'=\omega$.  A stratum corresponds to prescribed degree of zeroes and poles. We denote by $\mathcal{H}(n_1,\dots ,n_r,-p_1,\dots ,-p_s)$ the \emph{stratum} that corresponds to meromorphic differentials with zeroes of degree $n_1,\dots ,n_r$ and poles of degree $p_1,\dots ,p_s$. Such stratum is nonempty if and only if $\sum_{i=1}^r n_i-\sum_{j=1}^s p_j=2g-2$ for some integer $g\geq 0$ and if $\sum_{j=1}^s p_j>1$ (\emph{i.e.} if there is not just one simple pole.). A \emph{minimal stratum} is a stratum with $r=1$, \emph{i.e.} which corresponds to surfaces with only one conical singularity and possibly several poles.

We define the topology on this space in the following way: a small neighborhood of $S$, with conical singularities $\Sigma$, is defined to be the equivalent classes of surfaces $S'$ for which there is a differentiable injective map $f:S\backslash V(\Sigma)\to S'$ such that $V(\Sigma)$ is a (small) neighborhood of $\Sigma$, $Df$ is close the identity in the translation charts, and the complement of the image of $f$ is a union on disks. One can easily check that this topology is Hausdorff.



\section{Tools}\label{sec:tools}

\subsection{Breaking up a singularity: local construction}\label{bzero:loc}
Here we describe a sur\-ge\-ry, introduced by Eskin, Masur and Zorich (see \cite{EMZ}, Section~8.1) for Abelian differentials, that ``break up'' a singularity of degree $k_1+k_2\geq 2$ into two singularities of degree $k_1\geq 1$ and $k_2\geq 1$ respectively. This surgery is local, since the metric is modified only in a neighborhood of the singularity of degree $k_1+k_2$. In particular, it is also valid for the flat structure defined by a meromorphic differential.

\begin{figure}[htb]
\begin{center}
\includegraphics[width=360pt]{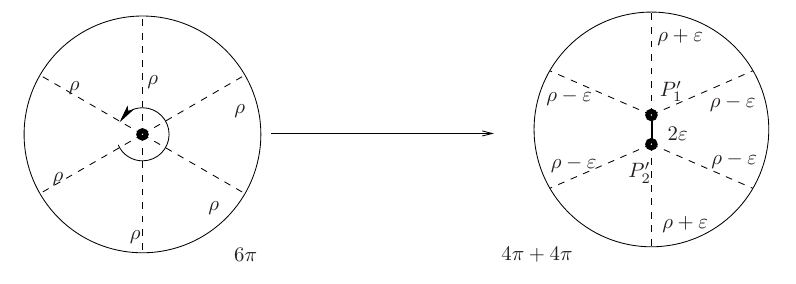}
\caption{Breaking up a zero, after Eskin, Masur and Zorich}
\label{bzero}
\end{center}
\end{figure}

We start from a singularity of degree $k_1+k_2$. A neighborhood of such singularity is obtained by gluing $(2k_1+2k_2+2)$ Euclidean half disks in a cyclic order. The singularity breaking procedure consists in changing continuously the way these half disks are glued together, as in Figure~\ref{bzero}. This breaks the singularity of degree $k_1+k_2$ into  singularities of degree $k_1$ and $k_2$ respectively, and with a small saddle connection joining them.

\subsection{Bubbling a handle}\label{bubbling}
The following surgery was introduced by Kontsevich and Zorich in \cite{KoZo}. Since it is a local construction, it is also valid for meromorphic differentials. 
As before, we start from  a singularity of degree $k_1+k_2$ on a surface $S$. We first apply the previous surgery to get a pair of singularities of degree $k_1$ and $k_2$ respectively, and with a small saddle connection $\gamma$ joining them. Then, we cut the surface along $\gamma$ and obtain a flat surface with a boundary component that consists of two geodesic segments $\gamma_1,\gamma_2$. We identify their endpoints and the corresponding segments are now closed boundary geodescis $\gamma_1',\gamma_2'$. Then, we consider a cylinder with two boundary components isometric to $\gamma_i'$, and glue each of these component to $\gamma_i'$. The angle between $\gamma_1'$ and $\gamma_2'$ is $(k_1+1)2\pi$ (and $(k_2+1)2\pi$) 

Using a notation similar to the one introduced by Lanneau in \cite{La:cc}, we will denote by $S\oplus (k_1+1)$ the resulting surface for an arbitrary choice of continuous parameters. Different choices of continuous parameters lead to the same connected component and from a path $(S_t)_{t\in [0,1]}$, one can easily deduce a continuous path $S_t\oplus (k_1+1)$. Hence, as in \cite{La:cc}, the connected component of $S\oplus s$ only depends on $s$ and on the connected component of $S$. So, if $S$ is in a connected component $\mathcal{C}$ of a stratum of Abelian (resp. meromorphic) differential with only one singularity, $\mathcal{C}\oplus s$ is the connected component of a stratum of Abelian (resp. meromorphic) differentials obtained by the construction. 

\begin{remark}
{The notation $\oplus$ slightly differs to the one introduced by Lanneau: since he manipulates \emph{quadratic differentials}, the angles can be any multiples of $\pi$, while in our case, we only have multiples of $2\pi$. So the surface we obtain would have been written $S\oplus 2(k_1+1)$ with the notation of Lanneau.}
\end{remark}

The following lemma is Proposition~2.9 in the paper of Lanneau \cite{La:cc}, written in our context. The ideas behind this proposition were also in the paper of Kontsevich and Zorich \cite{KoZo}. 
\begin{lemma}\label{lemme:lanneau}
Let $\mathcal{C}$ be connected component a minimal stratum  of the moduli space of meromorphic differentials, of the form $\mathcal{H}(n,-p_1,\dots ,-p_r)$. Then, the following statements hold.
\begin{enumerate}
\item $ \mathcal{C}\oplus s_1 \oplus s_2=\mathcal{C}\oplus s_2 \oplus s_1$ if $1\leq s_1,s_2\leq n+1$ and either $s_1\neq \frac{n}{2}+1$ or $s_2 \neq \frac{n+2}{2}+1$.
\item $ \mathcal{C}\oplus s_1 \oplus s_2=\mathcal{C}\oplus s_2-1 \oplus s_1+1$ if $1\leq s_1\leq n+1$ and $2\leq s_2\leq n+3$.
\item $\mathcal{C}\oplus s_1 \oplus s_2=\mathcal{C}\oplus s_2-2 \oplus s_1$ if $1\leq s_1\leq n+1$ and $1\leq s_2\leq n+3$ and  $s_2-s_1\geq 2$.
\item $\mathcal{C}\oplus s=\mathcal{C}\oplus (n+2-s)$ for all $s\in \{1,\dots, n+1\}$
\end{enumerate}
\end{lemma}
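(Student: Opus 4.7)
The plan is to observe that the entire content of the lemma concerns purely local surgeries performed in a neighborhood of the unique conical singularity of $\mathcal{C}$, and that the poles of $\omega$ play no role in the argument. Therefore the proofs Kontsevich--Zorich gave in \cite{KoZo} for the Abelian case, and Lanneau's systematic treatment in \cite{La:cc} for quadratic differentials, go through essentially verbatim once one replaces ``half--disks glued in a cyclic order around a conical point'' with the same object in our setting. The substantive step is to verify that in our conventions the angle parameters $s_i$ behave exactly as claimed; everything else is transport of Lanneau's argument.

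First I would set up the combinatorial picture. By Section~\ref{bubbling}, a surface $S'\in\mathcal{C}\oplus s$ is obtained by picking an integer $s\in\{1,\dots,n+1\}$, a small $\varepsilon>0$, and a continuous angular parameter; concretely, one breaks the singularity of degree $n$ into two singularities of degrees $s-1$ and $n-s+1$ connected by a short saddle connection $\gamma$ of length $\varepsilon$, then bubbles a handle along $\gamma$. The resulting singularity has degree $n+2$ and cone angle $(n+3)2\pi$, and the parameter $s$ records which pair of consecutive half--disks in the unfolded neighborhood was selected. With this dictionary in hand, relation~(4), $\mathcal{C}\oplus s=\mathcal{C}\oplus (n+2-s)$, is immediate: swapping the two boundary circles of the cylinder being glued (equivalently, reading the cyclic order of half--disks in the opposite direction) sends $s$ to $n+2-s$, and this swap is realised by a continuous rotation of the handle.

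Relations~(2) and (3) are then consequences of a single geometric move: once a first handle has been bubbled at parameter $s_1$, the resulting surface has a degree $n+2$ singularity; performing a second bubbling at parameter $s_2$ involves choosing a pair of half--disks among the $n+4$ now available, some of which lie on the new handle. By sliding the attachment locus of the second handle continuously across the already--present handle, the label $s_2$ changes in a controlled way: crossing the two half--disks added by the first bubbling subtracts $2$ from $s_2$ (this gives (3)), while making the second handle absorb one of these half--disks and release one on the other side shifts the labels $(s_1,s_2)\mapsto (s_1+1,s_2-1)$ (this gives (2)). These isotopies are realized by moving the small saddle connections supporting the handles within the local model, and the range constraints in the statement are exactly the ranges in which such a slide is possible without collapsing a handle. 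For~(1), one can perform the two bubblings at widely separated angular sectors; when the combinatorial conditions $s_1\neq n/2+1$ or $s_2\neq (n+2)/2+1$ hold, the two handles never have to cross, so their order can be swapped through a continuous family entirely inside $\mathcal{C}\oplus s_1\oplus s_2$.

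The main obstacle I expect is bookkeeping: writing down exactly how the angular labels transform under each local isotopy, and checking that the excluded configuration in~(1) is precisely the only obstruction. This is the content of Lanneau's calculation in \cite{La:cc}, and since none of his moves ever leave a small disk around the conical point, nothing in his argument interacts with the poles or with the non--compactness of the underlying flat surface. Thus the conclusion transfers directly to the meromorphic setting.
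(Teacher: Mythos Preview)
Your proposal is correct and takes essentially the same approach as the paper: the paper does not give a self-contained proof of this lemma at all, but simply states that it is Proposition~2.9 of Lanneau~\cite{La:cc} transported to the meromorphic setting, the justification being precisely your observation that the bubbling-a-handle surgery is purely local near the conical singularity and therefore insensitive to the presence of poles. Your sketch of the geometric moves behind relations~(1)--(4) goes somewhat beyond what the paper records, but is consistent with the arguments in \cite{KoZo} and \cite{La:cc} to which the paper defers.
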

\begin{remark}
There is a small mistake in the statement of Lanneau: the condition ``either $s_1\neq \frac{n}{2}+1$ or $s_2 \neq \frac{n+2}{2}+1$'' does not appear while it is necessary.

This leads to a gap in the proof of Lanneau's  Lemma~6.13 in \cite{La:cc}, but this gap is easily solved by using  Lemma~A.2 of the same paper.
\end{remark}

\subsection{The infinite zippered rectangle construction}
In this section, we describe a construction of translation surfaces with poles which is analogous to the well known Veech zippered rectangle construction. We will call this construction the \emph{infinite zippered rectangles construction}. 

We first recall Veech's construction.

\subsubsection{The Veech construction of a translation surface}\label{veech:constr}
The {Veech construction}, or {zippered rectangle construction} is usually seen as a way to define a suspension over an interval exchange map (see \cite{Veech82}). We can also see it as a easy way to define (almost any) translation surface. 
Consider a finite alphabet $\mathcal{A}=\{\alpha_1,\dots ,\alpha_d\}$, and a pair on one to one maps $\pi_t,\pi_b:\mathcal{A}\to \{1,\dots ,d\}$.  Let 
$\zeta\in \mathbb{C}^\mathcal{A}$ be a vector for which each entry has positive real part.

The Veech construction can be seen in two (almost) equivalent ways. One with a $2d$ sided polygon, and one with $d$ rectangles that are identified on their boundary. 

We present the first one, which is simpler but not as general as the second one. 
Consider the broken line $L_t$ on $\mathbb{C}=\mathbb R^2$ defined by concatenation of the vectors $\zeta_{\pi_t^{-1}(j)}$ (in this order) for $j=1,\dots,d$ with starting point at the origin. Similarly, we consider the broken line $L_b$  defined by concatenation of the vectors $\zeta_{\pi_b^{-1}(j)}$ (in this order) for $j=1,\dots,d$ with starting point at the origin.

We assume that $\zeta$ is such that the vertices of $L_t$ are always above the real line, except possibly the foremost right (and of course the one at the origin), and that similarly, the vertices of $L_b$ are below the real line. Such $\zeta$ is called \emph{suspension datum} (see \cite{MMY}), and exists under a combinatorial condition on  $(\pi_t,\pi_b)$ usually called ``irreducibility''.

If the lines $L_t$ and $L_b$ have no intersections other than the endpoints, we can construct a translation surface $X$ by identifying each side $\zeta_j$ on $L_t$ with the side $\zeta_j$ on $L_b$ by a translation. The resulting surface is a translation surface endowed with  the form $\omega=dz$ (see Figure~\ref{Veech:construction}).

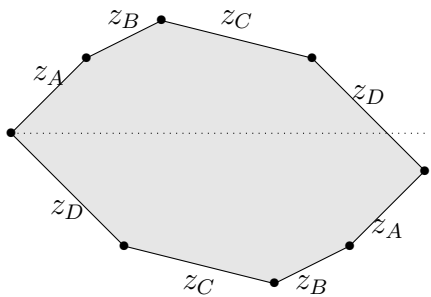
\begin{figure}[htb]
\begin{tikzpicture}
\coordinate (z1) at (1,1);
\coordinate (zz1) at (-1,-1);
\coordinate (z2) at (1,0.5);
\coordinate (zz2) at (-1,-0.5);
\coordinate (z3) at (2,-0.5) ;
\coordinate (zz3) at (-2,0.5) ;
\coordinate (z4) at (1.5,-1.5) ;
\coordinate (zz4) at (-1.5,1.5) ;
\draw [black,fill=gray!20] (0,0)--++ (z1) node[midway,above] {$z_A$} node {\tiny $\bullet$}
                     --++ (z2) node[midway,above] {$z_B$} node {\tiny $\bullet$}
                     --++ (z3) node[midway,above] {$z_C$} node {\tiny $\bullet$}
                     --++ (z4) node[midway,above] {$z_D$} node {\tiny $\bullet$}
                     --++ (zz1) node[midway,below] {$z_A$} node {\tiny $\bullet$}
                     --++ (zz2) node[midway,below] {$z_B$} node {\tiny $\bullet$}
                     --++(zz3) node[midway,below] {$z_C$} node {\tiny $\bullet$}
                     --++(zz4) node[midway,below] {$z_D$}  node {\tiny $\bullet$}                  
                     --cycle;
\draw[dotted] (0,0)--(5.5,0);
\end{tikzpicture}
\caption{Veech's construction of a translation surface}
\label{Veech:construction}
\end{figure}

\begin{remark}
The surface constructed in this way can also be seen as a union of rectangles whose dimensions are easily deduced from $\pi_t,\pi_b$ and $\zeta$, and that are ``zippered'' on their boundary. One can define $S$ directely in this way: the construction works also if $L_t,L_b$ have other intersection points.
This is the  \emph{zippered rectangle construction}, due to  Veech (\cite{Veech82}). This construction coincides with the first one in the previous case.
\end{remark}

\subsubsection{Basic domains}\label{basic:domains}
Now we generalize the previous construction to obtain a flat surface that corresponds to a compact Riemann surface with a meromorphic differential. Instead of having a polygon with pairwise identification on its boundary, we will have a finite union of some   ``basic domains'' which are half-planes and infinite cylinders with polygonal boundaries (see Figure~\ref{fig:basic:domains}).

Let $n\geq 0$. Let $\zeta\in \mathbb{C}^n$ be a complex vector whose entries have positive real part. 

Consider the broken line $L$ on $\mathbb{C}$ defined by concatenation of the following:
\begin{itemize}
\item the half-line $l_1$ that corresponds to $\mathbb{R}^-$,
\item the broken line $L_t$ defined as above, \emph{i.e.}  the concatenation of the segment defined by the vectors $\zeta_{j}$ (in this order) for $j=1,\dots,n$ with starting point at the origin,
\item the horizontal half line $l_2$ starting from the right end of $L_t$, and going to the right.
\end{itemize}
We consider the subset $D^+(z_1,\dots ,z_n)$ (resp. $D^-(z_1,\dots ,z_n)$) as the set of complex numbers that are above $L$. The line $l_1$ will be refered to as the \emph{left} half-line, and $l_2$ will be refered to as the \emph{right} half-line. We will sometime write such domains $D^{+}$ or $D^-$ for short.
The sets $D^\pm$ are kinds of half-planes with polygonal boundaries.
Note that $n$ might be equal to $0$, and in this case, $D^+$ (resp. $D^-$) is just a half-plane with a marked point on its (horizontal) boundary.

Similarly, if $n\geq 1$, we can define the subset $C^+(z_1,\dots ,z_n)$ (resp. $C^-(z_1,\dots ,z_n)$) as the set of complex numbers that are above $L_t$. Its boundary consists of two infinite vertical half-lines joined by the broken line $L_t$. The two infinite half-lines will be identified in the resulting construction, hence $C^\pm$ is just an infinite half-cylinder with a polygonal boundary.

\begin{figure}[htb]
\label{ex:D:C}
\begin{tikzpicture}[scale=0.5]
\coordinate (z1) at (1,0.5);
\coordinate (z2) at (1,-1.5);
\coordinate (z3) at (2,2.5) ;
\coordinate (z4) at (1.5,-1) ;
 \draw [dashed] (-1.5,0)--(0,0);
\draw [dashed] (11,1.5)--(12.5,1.5);
 \draw [dashed] (16,7.5)--(16,5.5);
\draw [dashed] (20,7.5)--(20,5.5);

\fill [fill=gray!20] (-1.5,0)--(0,0)--++(3.5,0) --++ (z1) --++ (z2) --++(z3) --++(5,0) --++(0,4.5)--++(-14,0) node[midway,below] {$D^+$} --cycle;

\draw (0,0)--++(3.5,0) node[midway,above] {$l_1$} node {\tiny $\bullet$}
                     --++ (z1) node[midway,above] {$\zeta_1$} node {\tiny $\bullet$}
                     --++ (z2) node[midway,above right] {$\zeta_2$} node {\tiny $\bullet$}
                     --++(z3) node[midway,below right] {$\zeta_3$} node {\tiny $\bullet$}
                     --++(3.5,0) node[midway, above] {$l_2$};

\fill [fill=gray!20] (16,0) --++ (z1) --++ (z2) --++(z3) --++(0,6)
                    --++(-4,0)  node[midway,below] {$C^+$}--cycle;

\draw (16,5.5) --++(0,-5.5)
                     --++ (z1) node[midway,above] {$\zeta_1$} node {\tiny $\bullet$}
                     --++ (z2) node[midway,above right] {$\zeta_2$} node {\tiny $\bullet$}
                     --++(z3) node[midway,below right] {$\zeta_3$} node {\tiny $\bullet$} 
                     --++ (0,4);
\end{tikzpicture}
\caption{A domain $D^+(\zeta_1,\zeta_2,\zeta_3)$ and a domain $C^+(\zeta_1,\zeta_2,\zeta_3)$}
\label{fig:basic:domains}
\end{figure}
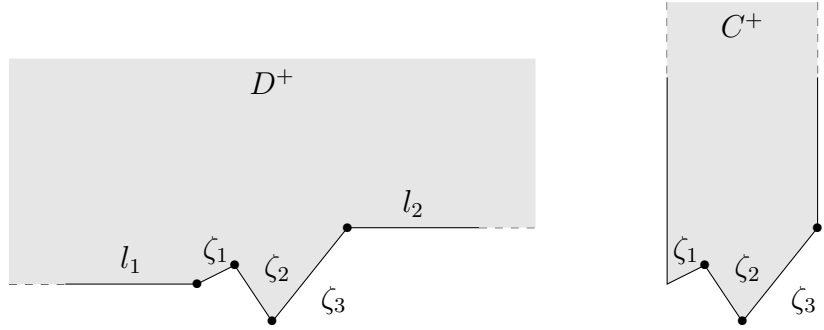

\subsubsection{An example: a surface with a single pole of order 2.}
The idea of the construction is to glue by translation the basic domains together in order to get a noncompact translation surface with no boundary components. Since the formal description is rather technical, we first present a simple version of the construction. 

Let $\mathcal{A}$ be a finite alphabet and $\pi_t,\pi_b:\mathcal{A}\to \{1,\dots ,d\}$ be one-to-one maps.
Let $\zeta\in \mathbb{C}^\mathcal{A}$ be such that $Re(\zeta_\alpha)>0$ for all $\alpha\in \mathcal{A}$.

We define a flat surface $S$ as the disjoint union of the two half-planes  $D^+=D^+(\zeta_{\pi_t^{-1}(1)},\dots ,\zeta_{\pi_t^{-1}(n)})$ and $D^-=D^-(\zeta_{\pi_b^{-1}(1)},\dots ,\zeta_{\pi_b^{-1}(n)})$ glued on their boundary by translation: the left half line of $D^+$ being glued to the left half-line of $D^-$ and similarly with the right half-lines, and a segment in $D^+$ corresponding to some $\zeta_i$ is glued to the corresponding one of $D^-$.

Note that contrary to the case of compact translation surfaces, there is no ``suspension data condition'' on $\zeta$, hence, no combinatorial condition on $\pi$. The only condition that we require is that $Re(\zeta_i)>0$ for all $i$. Note also that we can have $n=0$, in this case $S=\mathbb{C}$.

\begin{figure}[htb]
\label{ex:surf:av:pole2}
\begin{tikzpicture}[scale=0.6]
\coordinate (z1) at (1,0.5);
\coordinate (z2) at (1,-1.5);
\coordinate (z3) at (2,2.5) ;
\coordinate (z4) at (1.5,-1) ;
 \draw [dashed] (-5,0)--(-3.5,0);
\draw [dashed] (9,0.5)--(10.5,0.5);
\fill [fill=gray!20] (-3.5,0)--(0,0) --++ (z1) --++ (z2)  --++ (z3) --++ (z4) --++(3.5,0) --++(0,3)--++(-12.5,0) node[midway,below] {$D^+$}--cycle;

\draw (-3.5,0)--(0,0) node[midway,above] {$l_1$} node {\tiny $\bullet$}
                     --++ (z1) node[midway,above] {$z_1$} node {\tiny $\bullet$}
                     --++ (z2) node[midway,above] {$z_2$} node {\tiny $\bullet$}
                     --++ (z3) node[midway,above] {$z_3$} node {\tiny $\bullet$}
                     --++ (z4) node[midway,above] {$z_4$} node {\tiny $\bullet$}
                     --++(3.5,0) node[midway, above] {$l_2$}  node[above right] {$L_1$} ;
                     
\fill [fill=gray!20] (-3.5,-3)--(0,-3) --++ (z2) --++ (z1)  --++ (z4) --++ (z3) --++(3.5,0) --++(0,-4)--++(-12.5,0) node[midway,above] {$D^-$} --cycle;

 \draw [dashed] (-5,-3)--(-3.5,-3);
\draw [dashed] (9,-2.5)--(10.5,-2.5);
\draw (-3.5,-3)--(0,-3) node[midway,above] {$l'_1$} node {\tiny $\bullet$}
                     --++ (z2) node[midway,above] {$z_2$} node {\tiny $\bullet$}
                     --++ (z1) node[midway,above] {$z_1$} node {\tiny $\bullet$}
                     --++ (z4) node[midway,above] {$z_4$} node {\tiny $\bullet$}
                     --++ (z3) node[midway,above] {$z_3$} node {\tiny $\bullet$}
                     --++(3.5,0) node[midway, above] {$l'_2$}  node[above right] {$L_2$};

\end{tikzpicture}
\caption{Construction of a translation surface with a degree 2 pole.}
\end{figure}
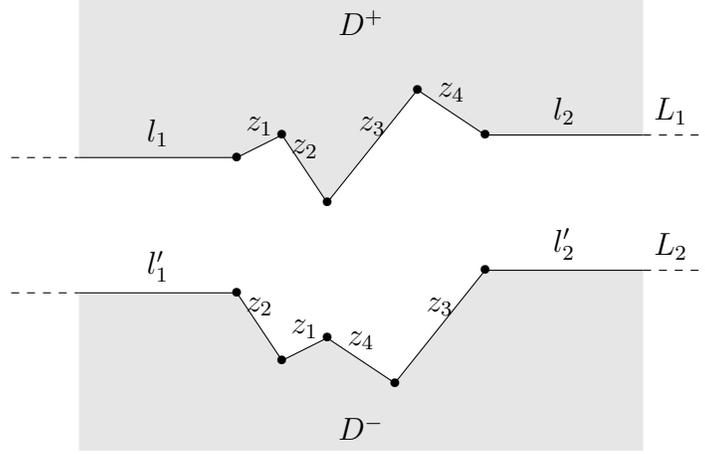

\subsubsection{General case} \label{inf:zipp:rect}


We can generalize the above construction in order to have several poles of any order. Instead of considering two half-planes $D^+,D-$, we will do the same construction starting from $2d$ half-planes,  $s^++s^-$  infinite cylinders, and define identification on their boundary. More precisely:

Let $\zeta\in \mathbb{C}^n$ with positive real part. We consider the following combinatorial data:
\begin{itemize}
\item A collection {$\bf n^+$} of integers $0=n_0^+\leq n_1^+\leq \dots n_d^+ < \dots <n_{d+s^+}^+=n$
\item A collection {$\bf n^-$} of integers $0=n_0^-\leq n_1^-\leq \dots n_d^- < \dots <n_{d+s^-}^-=n$
\item A pair of maps $\pi_t,\pi_b: \mathcal{A}\to \{1,\dots ,n\}$
\item A collection {$\bf d$} of integers $0=d_0< d_1 < d_2 <\dots < d_{r}=d$.
\end{itemize}
The resulting surface will have $r$ poles of order greater than or equal to two, and $s^++s^-$ poles of order 1.

For each $i \in \{0,\dots ,d-1\}$, we consider the basic domains as defined before $D_i^+(\zeta_{\pi_{t}^{-1}(n_i^++1)},\dots ,\zeta_{\pi_{t}^{-1}(n_{i+1}^+)})$ and $D_i^-(\zeta_{\pi_b^{-1}(n_i^-+1)},\dots ,\zeta_{\pi_b^{-1}(n_{i+1}^-}))$.
 For $i\in\{d,\dots ,d+s^+-1\}$, we define $C_i^+(\zeta_{\pi_{t}^{-1}(n_i^++1)},\dots ,\zeta_{\pi_{t}^{-1}(n_{i+1}^+)})$.
  For $i\in\{d,\dots ,d+s^--1\}$, we define $C_i^-(\zeta_{\pi_b^{-1}(n_i^-+1)},\dots ,\zeta_{\pi_b^{-1}(n_{i+1}^-}))$.

\begin{figure}[htb]
\begin{tikzpicture}[scale=0.5]
\coordinate (z1) at (1,0.5);
\coordinate (z2) at (1,-1.5);
\coordinate (z3) at (2,2.5) ;
\coordinate (z4) at (1.5,-1) ;
\fill [fill=gray!20] (0,0)--++(3.5,0) --++ (z1) --++ (z2) --++(3.5,0) --++(0,4.5)--++(-9,0)--cycle;

\draw (0,0)--++(3.5,0) node[midway,above] {$l_1$} node {\tiny $\bullet$}
                     --++ (z1) node[midway,above] {$z_1$} node {\tiny $\bullet$}
                     --++ (z2) node[midway,above right] {$z_2$} node {\tiny $\bullet$}
                     --++(3.5,0) node[midway, above] {$l_2$};

\fill [fill=gray!20] (12,-1)--++(3.5,0) --++ (z3) --++ (z4) --++(3.5,0) --++(0,3)--++(-10.5,0)--cycle;

\draw (12,-1)--++(3.5,0) node[midway,above] {$l_3$} node {\tiny $\bullet$}
                     --++ (z3) node[midway,above left] {$z_3$} node {\tiny $\bullet$}
                     --++ (z4) node[midway,above] {$z_4$} node {\tiny $\bullet$}
                     --++(3.5,0) node[midway, above] {$l_4$};
                     
\fill [fill=gray!20] (0,-4)--++(3.5,0) --++ (z2) --++ (z3) --++(2.5,0) --++(0,-5)--++(-9,0)--cycle;

\draw (0,-4)--++(3.5,0)node[midway,below] {$l_1$} node {\tiny $\bullet$}
                     --++ (z2) node[midway,below left] {$z_2$} node {\tiny $\bullet$}
                     --++ (z3) node[midway,below right] {$z_3$} node {\tiny $\bullet$}
                     --++(2.5,0) node[midway, below] {$l_4$};
                     
\fill [fill=gray!20] (12,-3)--++(3.5,0) --++ (z4) --++ (z1) --++(4.5,0) --++(0,-4.5)--++(-10.5,0)--cycle;

\draw (12,-3)--++(3.5,0)node[midway,below] {$l_3$} node {\tiny $\bullet$}
                     --++ (z4) node[midway,below left] {$z_4$} node {\tiny $\bullet$}
                     --++ (z1) node[midway,below right] {$z_1$} node {\tiny $\bullet$}
                     --++(4.5,0) node[midway, below] {$l_2$};
                     
\end{tikzpicture}
\caption{Construction of a translation surface with a degree 3 pole.}
\label{ex:surf:pole:3}
\end{figure}
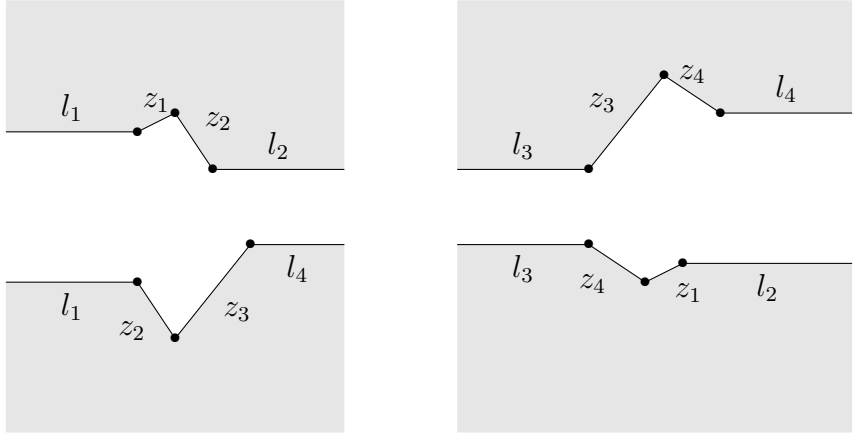

\begin{figure}[htb]
\begin{tikzpicture}[scale=0.5]
\coordinate (z1) at (1,0.5);
\coordinate (z2) at (1,-1.5);
\coordinate (z3) at (2,2.5) ;
\coordinate (z4) at (1.5,-1) ;
\fill [fill=gray!20] (0,0)--++(3.5,0) --++ (z1) --++ (z2) --++(3.5,0) --++(0,4.5)--++(-9,0)--cycle;

\draw (0,0)--++(3.5,0) node[midway,above] {$l_1$} node {\tiny $\bullet$}
                     --++ (z1) node[midway,above] {$z_1$} node {\tiny $\bullet$}
                     --++ (z2) node[midway,above right] {$z_2$} node {\tiny $\bullet$}
                     --++(3.5,0) node[midway, above] {$l_2$};

\fill [fill=gray!20] (12,-1)--++(3.5,0) --++ (z3) --++ (z4) --++(3.5,0) --++(0,3)--++(-10.5,0)--cycle;

\draw (12,-1)--++(3.5,0) node[midway,above] {$l_3$} node {\tiny $\bullet$}
                     --++ (z3) node[midway,above left] {$z_3$} node {\tiny $\bullet$}
                     --++ (z4) node[midway,above] {$z_4$} node {\tiny $\bullet$}
                     --++(3.5,0) node[midway, above] {$l_4$};
                     
\fill [fill=gray!20] (0,-4)--++(3.5,0) --++ (z2) --++ (z3) --++(2.5,0) --++(0,-5)--++(-9,0)--cycle;

\draw (0,-4)--++(3.5,0)node[midway,below] {$l_1$} node {\tiny $\bullet$}
                     --++ (z2) node[midway,below left] {$z_2$} node {\tiny $\bullet$}
                     --++ (z3) node[midway,below right] {$z_3$} node {\tiny $\bullet$}
                     --++(2.5,0) node[midway, below] {$l_2$};
                     
\fill [fill=gray!20] (12,-3)--++(3.5,0) --++ (z4) --++ (z1) --++(4.5,0) --++(0,-4.5)--++(-10.5,0)--cycle;

\draw (12,-3)--++(3.5,0)node[midway,below] {$l_3$} node {\tiny $\bullet$}
                     --++ (z4) node[midway,below left] {$z_4$} node {\tiny $\bullet$}
                     --++ (z1) node[midway,below right] {$z_1$} node {\tiny $\bullet$}
                     --++(4.5,0) node[midway, below] {$l_4$};
                     
\end{tikzpicture}
\caption{Construction of a translation surface with two poles of degree $2$}
\label{ex:surf:2pole:2}
\end{figure}
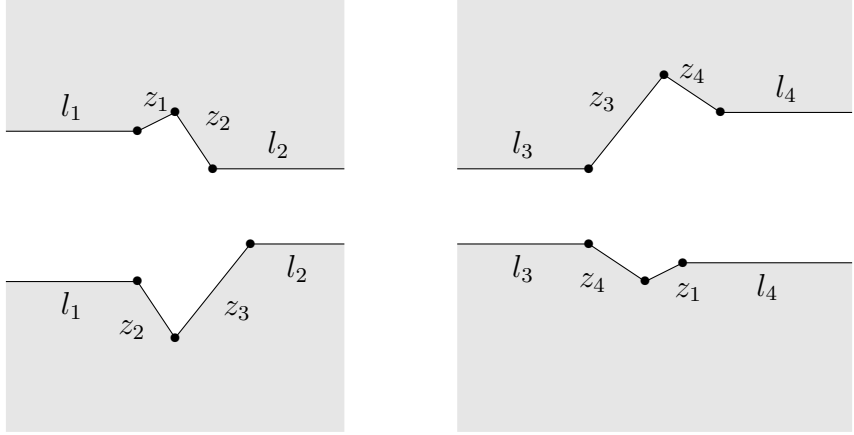

Then, we glue these domains together on their boundary by translations: 
\begin{itemize}
\item each segment corresponding to a parameter $\zeta_i$ in a $``+''$ domain is glued to the unique corresponding one in a $``-''$ domain.
\item each left line of a domain $D_i^+$ is glued to the left line of the domain $D_i^-$.
\item For each $i\in \{1,\dots ,d\} \backslash \{d_1,\dots , d_r \}$ the right line of the domain $D_i^-$ is glued to one of the domain $D_{i+1}^+$.
\item For each $i=d_k$, $k>0$, the right line of the domain $D_i^-$ is glued to one of the domain $D_{d_{k-1}+1}^+$.
\item For each $C_i^+$ and $C_i^-$, the two vertical lines are glued together.
\end{itemize}

The resulting surface $S$ has no more boundary components, and is a flat surface with poles and conical singularities. It might not be connected for a given combinatorial data. We will consider only those that that give connected surfaces.

Note that such surface is easily decomposes into a finite union of vertical strips and half-planes with vertical boundary (\emph{i.e.} ``infinite rectangle''), that are ``zippered'' on their boundary.

\begin{example}
Figure~\ref{ex:surf:pole:3} shows an example with $d=2$, $s^+=s^-=0$, ${n^+}=(0,2,4)$, ${n^-}=(0,2,4)$, $\pi_t=Id$, $(\pi_b^{-1}(1),\dots ,\pi_b^{-1}(n))=(2,3,4,1)$ and ${\bf d}=(0,2)$. One gets a surface in $\mathcal{H}(-3,5)$. 

Figure~\ref{ex:surf:2pole:2} shows an example with the same data, except that ${\bf d}=(0,1,2)$. One gets a surface in $\mathcal{H}(-2,-2,2,2)$.
\end{example}

\begin{lemma}\label{lem:dim}
Let $S$ be a genus $g$ surface in $\mathcal{H}(n_1,\dots,n_r,-p_1,\ldots,-p_s)$, obtained by the infinite zippered rectangle construction with a continuous parameter $\zeta\in \mathbb{C}^n$. Then 
$$n=2g+r+s-2$$
\end{lemma}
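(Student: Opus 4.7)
The plan is to compute the Euler characteristic of the closed surface $\overline{S}$ obtained from $S$ by adding the $s$ poles as marked points, via the CW decomposition naturally induced by the infinite zippered rectangle construction, and then solve for $n$.

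For the $0$-cells I would take the $r$ conical singularities together with the $s$ poles, so $V=r+s$. The $2$-cells come from the basic domains: each half-plane $D_i^{\pm}$, once the corresponding pole at infinity is added as a boundary point, is a closed disk, and there are $2d$ such; each half-cylinder $C_i^{\pm}$, once the corresponding simple pole is added, is also a closed disk, and there are $s^++s^-$ of them. Hence $F=2d+s^++s^-$. The $1$-cells come in three families: the $n$ ``$\zeta$-edges'' coming from the identifications parametrised by $\zeta\in\mathbb{C}^n$; the horizontal half-line edges at the poles of order $\geq 2$; and, to produce a bona fide CW structure in which each simple pole is a vertex on the boundary of its face, one radial edge from each simple pole to a chosen zero on the boundary of the corresponding cylinder, contributing $s^++s^-$ extra edges. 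At a pole of order $p_j\geq 2$, the associated cyclic block consisting of $p_j-1$ pairs $(D_i^+,D_i^-)$ carries $4(p_j-1)$ horizontal half-lines, which the gluing rules pair up into $2(p_j-1)$ edges running from a zero to that pole, for a total of $\sum_j 2(p_j-1)=2d$ such edges. Altogether $E=n+2d+s^++s^-$.

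Applying Euler's formula $V-E+F=\chi(\overline{S})=2-2g$ then gives
$$(r+s)-(n+2d+s^++s^-)+(2d+s^++s^-)=2-2g,$$
which simplifies immediately to $r+s-n=2-2g$, and hence $n=2g+r+s-2$.

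The main technical point to check carefully is the count $2d$ of horizontal half-line edges at the poles of order $\geq 2$: one must verify, pole by pole using the gluing rules of Section~\ref{inf:zipp:rect}, that the $4(p_j-1)$ horizontal half-lines in the cyclic block attached to a given pole are paired into exactly $2(p_j-1)$ edges, with none left unglued and no three identified at once. This is a direct combinatorial verification (the figures for $\mathcal{H}(-3,5)$ and $\mathcal{H}(-2,-2,2,2)$ illustrate the two typical situations), after which the rest of the argument is just the Euler characteristic computation above. Notice that in the final count the contributions $2d$ from half-line edges and $2d$ from $D_i^{\pm}$ faces cancel, as do the $s^++s^-$ radial edges and the $s^++s^-$ cylinder faces; this is why the specific value of $d$, of $s^+$, and of $s^-$ does not appear in the answer.
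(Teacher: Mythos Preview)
Your argument is correct and, like the paper, computes the Euler characteristic of the compactified surface $\overline{S}$ from a cell decomposition induced by the infinite zippered rectangle construction. The difference is only in the coarseness of the decomposition.

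The paper takes the poles as \emph{interior} points of the $2$-cells rather than as vertices: for each pole it glues together all of the basic domains attached to that pole along their horizontal half-lines (respectively, the two vertical half-lines of a $C^\pm$) and observes that the result, with the pole added, is a single disk. This yields the minimal count $V=r$, $E=n$, $F=s$, and the identity $s-n+r=2-2g$ drops out immediately with nothing further to verify. Your finer decomposition keeps each $D_i^{\pm}$ and $C_i^{\pm}$ as its own face, promotes the poles to vertices, and therefore has to introduce and count the $2d$ half-line edges and the $s^++s^-$ radial edges---all of which then cancel against the extra faces, as you note. Your verification of the pairing of the $4(p_j-1)$ half-lines into $2(p_j-1)$ edges within each cyclic block is correct (left lines pair within a pair $(D_i^+,D_i^-)$, right lines pair cyclically within the block), so there is no gap. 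The paper's route simply bypasses this bookkeeping by absorbing those edges into the interior of larger $2$-cells.
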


\begin{proof}
By construction, the surface (pole included) is obtained by gluing $s$ disks on their boundary. The resulting surfaces admits a decompositions into cells, with $s$ faces, $n$ edges, and $r$ vertices. So, we have $2-2g=s-n+r$, and the result follows.
\end{proof}

The following proposition will be very useful in the remaing of the paper. It is analogous to the well-known fact that that any translation surface with no vertical saddle connection is obtained by the Veech construction.
\begin{proposition}\label{no:vertical}
Any translation surface with poles with no vertical saddle connection is obtained by the infinite zippered rectangle construction.
\end{proposition}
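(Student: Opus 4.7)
The plan is to adapt Veech's classical zippered-rectangle argument, via a horizontal transversal and a first-return map of the vertical flow, to the non-compact setting of translation surfaces with poles.

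First I would show that every vertical separatrix issuing from a conical singularity escapes to a pole. Since $S$ has no vertical saddle connection, an upward (or downward) separatrix is a properly embedded half-infinite ray; by the local models for pole neighborhoods given in Section~\ref{sec:flat:merom}, such a ray must accumulate on exactly one pole—either spiraling into the infinite cylinder of a simple pole, or exiting along one of the $(p-1)$ asymptotic sheets of a pole of order $p \geq 2$.

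Next I would choose a horizontal transversal $I \subset S$ (a finite disjoint union of horizontal segments) whose endpoints are either conical singularities or horizontal ends escaping to a pole, and which meets every vertical trajectory that is not entirely contained in a simple-pole cylinder. Consider the first-return map $T$ of the upward vertical flow to $I$. Because no vertical saddle connection exists, $T$ is defined off finitely many points, and $I$ decomposes into finitely many open subintervals on each of which $T$ is a translation---exactly as in Veech's interval-exchange picture, with the novelty that certain subintervals are \emph{escape intervals} on which the forward vertical orbit never returns to $I$ because it reaches a pole; symmetrically, the backward flow has its own escape intervals coming from poles.

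From this decomposition one reads off the combinatorial data $(\pi_t, \pi_b, \mathbf{n^+}, \mathbf{n^-}, \mathbf{d})$ of Section~\ref{inf:zipp:rect}: each pair of matched returning intervals contributes a parameter $\zeta_i$ with positive real part; each pair consisting of an upward-escape interval together with its downward counterpart around the same higher-order pole produces a pair of half-planes $D^{\pm}$, and the cyclic ordering of the $p-1$ sheets at each such pole is encoded by the partition $\mathbf{d}$; finally the escape intervals wrapping around a simple pole produce the half-cylinders $C^{\pm}$. Reassembling these pieces via the infinite zippered rectangle construction yields a surface whose vertical-flow dynamics, singularity combinatorics and gluings agree with those of $S$, hence is isometric to $S$ by Section~\ref{sec:flat:merom}.

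The main obstacle I anticipate is producing such a transversal $I$: one must ensure it is rich enough to intersect every relevant vertical trajectory while remaining a finite union of segments with controlled endpoints. A natural construction is to start horizontal rays from each conical singularity and to extend them until they either meet another singularity or escape horizontally to a pole, then take $I$ to be the union. Verifying that this $I$ has the required properties uses the absence of vertical saddle connections (to control escape intervals) together with the local models at poles (to guarantee that horizontally escaping rays land in the right sheet of a higher-order pole or in a simple-pole cylinder).
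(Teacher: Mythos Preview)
Your approach is conceptually sound but takes a genuinely different route from the paper's proof, and it is the harder road.

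The paper does not build a horizontal transversal or a first-return map at all. Instead it invokes Strebel's structure theorem for the vertical foliation (Strebel, \S11.4): in the absence of vertical saddle connections every regular vertical leaf converges to a pole at both ends, and the complement of the singular leaves decomposes into finitely many maximal vertical half-planes $\mathcal{P}_i$ and finitely many maximal vertical strips $\mathcal{S}_j$, each with a unique conical singularity on each boundary component. The paper then makes one cut in each piece: the horizontal half-line from the singularity in each $\mathcal{P}_i$, and the (generally oblique) geodesic segment joining the two boundary singularities in each $\mathcal{S}_j$. Gluing the resulting upper halves along their vertical sides already yields the $D^+$ and $C^+$ domains; similarly for the lower halves. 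No interval-exchange analysis is needed.

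Your plan, by contrast, mimics Veech's original dynamical argument: a horizontal transversal $I$, a first-return map, returning intervals versus escape intervals. This can be made to work, but two points are worth flagging. First, your proposed $I$---horizontal rays from every singularity, extended until they hit a singularity or escape---will typically contain half-infinite rays, so your ``first-return map'' is on an unbounded set; the usual finiteness arguments for interval exchanges do not apply verbatim, and to show that the returning and escape subintervals are finitely many you will in effect re-derive the strip/half-plane decomposition that the paper simply quotes from Strebel. Second, your horizontal cuts do not directly produce the parameters $\zeta_i$: in the paper the $\zeta_i$ are the oblique geodesic cuts across the strips, whereas in your setup they would have to be reconstructed from interval widths plus first-return heights, adding an extra bookkeeping step.

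In short: your outline is correct in spirit, and the obstacle you identify (building the transversal) is the real one, but the paper sidesteps it entirely by starting from Strebel's global description of the vertical foliation and cutting each piece once.
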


\begin{proof}
According to the book of Strebel \cite{strebel} Section~11.4, when there are no vertical saddle connections the vertical trajectories are of the following two types: 
\begin{enumerate}
\item lines that converge to a pole in each direction.
\item half-lines starting (or ending) from a conical singularity and converging to a pole on their infinite direction.
\end{enumerate}
Furthermore, the set of non-singular vertical trajectories is a disjoint union of half-planes and of vertical strips $\sqcup_i \mathcal{P}_i \sqcup_j \mathcal{S}_j$. The half-planes have one vertical boundary component, and the strips have two vertical boundary component. 
We choose these half-planes or strips as large as possible, so each boundary component necessarily contains a conical singularity. This singularity is unique for each connected component, otherwise there would be a vertical saddle connection on the surface. 
This number of half-planes and strips is necessarily finite, since there is only a finite number of conical singularities, and each conical singularities is adjacent to a finite number of half-plane or strip.

We cut each half-plane $\mathcal{P}_i$ in the following way: the boundary of $\mathcal{P}_i$ consists of a union of two vertical half-lines starting from the conical singularity. We consider the unique horizontal half-line starting from this singularity and cut $\mathcal{P}_i$ along this half line. It decomposes $\mathcal{P}_i$ into two components $\mathcal{P}_i^+$ (the upper one) and $\mathcal{P}_i^-$ (the lower one). 

We cut each strip $\mathcal{S}_j$ in the following way: the boundary of $\mathcal{S}_i$  has two components, each consists of a union of two vertical half-lines starting from a conical singularity. There is a unique geodesic segment joining these two boundary singularities. We cut $\mathcal{S}_j$ along this segment and obtain two components $\mathcal{S}_j^+$ and $\mathcal{S}_j^-$.

The surface $S$ is obtained as the disjoint union of the $\mathcal{S}_j^\pm$ and $\mathcal{P}_i^\pm$, glued to each other by translation on their boundary components. 
Now we remark that the  $\mathcal{P}_i^+$ and $\mathcal{S}_j^+$ are necessarily glued to each other along their vertical boundary components. Under this identification, $\sqcup  \mathcal{P}_i^+ \sqcup_j \mathcal{S}_j^+$ is a union of subsets of the type $D^+$ and $S^+$, as in the previous construction.

Similarly, gluing together along vertical sides the union of the $\mathcal{S}_j^-$ and $\mathcal{P}_i^-$, one obtains a union of $D^-$ and $C^-$ type subsets.  

So the surface is obtained by the infinite zippered rectangle construction.
\end{proof}

\begin{remark}\label{rq:param}
Note that the parameters $(\zeta_i)_i$ are uniquely defined (once the suitable vertical direction is fixed) and the infinite zipperered rectangle construction defines a triangulation of the surface for which the $(\zeta_i)$ are the local parameters for the strata. Hence, map  $S\to (\zeta_i)_i$ is a local homeomorphism. The corresponding saddle connections form a basis of the relative homology $H_1(S, \Sigma,\mathbb{Z})$, where $\Sigma$ is the set of conical singularities of $S$.
\end{remark}

Note that for any translation surface with poles, the set of saddle connections is at most countable, so it is always possible to rotate the surface in such way that there are no vertical saddle connections. Hence the previous theorem gives a representation for \emph{any} translation surface with poles. One important consequence this theorem is Proposition~\ref{adj:minimal}, which is the analogous version of a key argument in \cite{KoZo}.

\section{Genus one case}\label{genus1} 
\subsection{Connected components}
We first recall the following result in algebraic geometry, which is a consequence of  Abel's theorem.

\begin{NNthm}
Let $\overline{X}=\mathbb{C}/\Gamma$ be a torus and let $D=\sum_i \alpha_i P_i$ be a divisor. Then there exists a meromorphic differential whose divisor is $D$ if and only if $\sum {z_i}\in \Gamma$, where for each $i$,  ${z_i}$ is a representative in $\mathbb{C}$ of $P_i$.
\end{NNthm}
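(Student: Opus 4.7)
The plan is to reduce the statement to the classical Abel theorem on the elliptic curve $\overline{X}=\mathbb{C}/\Gamma$, using that $\overline{X}$ admits a canonical trivialization of its canonical bundle.

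Namely, the form $dz$ on $\mathbb{C}$ is invariant under translation by $\Gamma$, hence descends to a nowhere-vanishing holomorphic differential $\omega_0$ on $\overline{X}$. Every meromorphic differential $\omega$ on $\overline{X}$ can therefore be written uniquely as $\omega=f\cdot\omega_0$ with $f$ a meromorphic function on $\overline{X}$, and
$$(\omega)=(f)+(\omega_0)=(f).$$
Thus meromorphic differentials with divisor $D$ are in bijection with meromorphic functions with divisor $D$, and it suffices to characterize which divisors $D$ are divisors of meromorphic functions on $\overline{X}$.

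This is precisely the classical Abel theorem for the torus: a divisor $D=\sum_i\alpha_i P_i$ of degree zero is principal if and only if $\sum_i\alpha_i z_i\in\Gamma$, for any choice of lifts $z_i\in\mathbb{C}$ of the $P_i$. The degree-zero hypothesis is automatic on the meromorphic-differential side, since the canonical divisor of an elliptic curve has degree $2g-2=0$. Combining these facts gives both implications at once. For the "only if" direction, if $(\omega)=D$ then writing $\omega=f\omega_0$ yields a meromorphic $f$ with $(f)=D$, so $\deg D=0$ and Abel furnishes the congruence. For the "if" direction, Abel produces a meromorphic $f$ with $(f)=D$, and $\omega:=f\omega_0$ then has $(\omega)=D$. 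The only substantive input is Abel's theorem itself, which I would invoke as a standard result; the proof presents no real obstacle beyond recording that $K_{\overline{X}}$ is trivial on an elliptic curve.
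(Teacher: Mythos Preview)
Your argument is correct and is precisely the derivation the paper has in mind: the paper does not give its own proof but simply introduces the statement as ``a consequence of the Abel theorem,'' and your trivialization $\omega\mapsto \omega/\omega_0$ followed by the classical Abel criterion is exactly that consequence spelled out. You also correctly supply the weighted sum $\sum_i\alpha_i z_i$ and the degree-zero condition (automatic since $2g-2=0$), which the paper's phrasing leaves implicit.
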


Now we use this theorem to describe the connected components in the genus one case.

\begin{theorem} \label{th:g1:complex}
Let $\mathcal{H}=\mathcal{H}(n_1,\dots  ,n_r,-p_1,\dots ,-p_s) $ be a stratum of genus one meromorphic differentials. Let $d=\gcd(n_1,\dots ,n_r,p_1,\dots  p_s)$, and let $c$ be the number of positive divisors of $d$. Then the number of connected components of $\mathcal{H}$ is:
\begin{itemize}
\item $c$ if $r+s\geq 3$.
\item $c-1$ if $r=s=1$.
\end{itemize}
\end{theorem}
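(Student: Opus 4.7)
The plan is to use Abel's theorem to parameterize the stratum, extract a discrete invariant $\delta$ taking values in the set of divisors of $d := \gcd(n_1,\dots,n_r,p_1,\dots,p_s)$, and show that $\delta$ classifies the connected components.

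First I will write $\Gamma = \mathbb{Z} + \tau\mathbb{Z}$ with $\tau \in \mathbb{H}$, and choose lifts $z_1,\dots,z_r,w_1,\dots,w_s \in \mathbb{C}$ of the zeros and poles on the torus $\mathbb{C}/\Gamma$. By Abel's theorem, the prescribed divisor is realized by a meromorphic differential if and only if $\lambda := \sum_i n_i z_i - \sum_j p_j w_j \in \Gamma$. Writing $\lambda = a + b\tau$ with $(a,b) \in \mathbb{Z}^2$, a change of lift of $z_i$ by an element of $\Gamma$ shifts $(a,b)$ by an integer multiple of $(n_i, 0)$ or $(0, n_i)$, and similarly for $w_j$ with coefficient $p_j$. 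Therefore the class $[\lambda] \in \Gamma/d\Gamma \cong (\mathbb{Z}/d)^2$ is well-defined on $\mathcal{H}$, and the residual change-of-basis action of $SL_2(\mathbb{Z})$ on $\Gamma$ descends to $SL_2(\mathbb{Z}/d)$ on $(\mathbb{Z}/d)^2$. Since $SL_2(\mathbb{Z}/d)$ acts transitively on pairs of fixed $\gcd$, the orbit of $[\lambda]$ is classified by $\delta := \gcd(a,b,d)$, a divisor of $d$. Because $\lambda$ takes discrete values in $\Gamma$ while depending continuously on the parameters, $\delta$ is locally constant on $\mathcal{H}$, bounding the number of components above by $c$.

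Next I will realize each divisor. When $r + s \geq 3$, for every $\delta$ dividing $d$, I fix $\tau$ generically, prescribe $\lambda = \delta \in \mathbb{Z} \subset \Gamma$, fix $r + s - 1$ of the punctures generically, and solve Abel's linear equation for the remaining one; the distinctness loci being complex codimension one, generic solutions are admissible. When $r = s = 1$, Abel's condition becomes $n_1(z_1 - w_1) \in \Gamma$ with $z_1 \neq w_1$, and writing $z_1 - w_1 = (a + b\tau)/n_1$, distinctness on the torus is equivalent to $(a,b) \not\equiv (0,0) \pmod{n_1}$; this excludes exactly the orbit $\delta = d = n_1$ and leaves $c - 1$ realized values.

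The main obstacle will be to show that for each realized $\delta$, the corresponding subset of $\mathcal{H}$ is path-connected. I will pass to a cover $\widetilde{\mathcal{H}} \to \mathcal{H}$ on which $\tau$ and the integer pair $(a,b)$ are rigid. For fixed $(a,b)$, the subset $\widetilde{\mathcal{H}}_{(a,b)}$ fibers over the contractible base $\mathbb{H}$ with fiber the affine hyperplane $\{\sum_i n_i z_i - \sum_j p_j w_j = a + b\tau\} \subset \mathbb{C}^{r+s}$, minus the walls $\{z_i \equiv z_j\}, \{z_i \equiv w_j\}, \{w_i \equiv w_j\} \pmod{\Gamma_\tau}$. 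These walls have real codimension two in the affine fiber, so the complement is connected, and hence $\widetilde{\mathcal{H}}_{(a,b)}$ is connected. Finally, the deck transformations (which shift $(a,b)$ inside $d\mathbb{Z}^2$) together with $SL_2(\mathbb{Z})$ act transitively on $\{(a,b) : \gcd(a,b,d) = \delta\}$, so the union of the relevant pieces $\widetilde{\mathcal{H}}_{(a,b)}$ descends to a single component of $\mathcal{H}$, yielding exactly one connected component per realized divisor.
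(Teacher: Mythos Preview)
Your proof is correct and follows essentially the same approach as the paper: both parameterize the stratum via Abel's theorem, extract the integer pair $(a,b)$ (the paper calls it $(p,q)$) modulo $d$, and identify the $SL_2$-orbit via $\gcd(a,b,d)$. The paper phrases this as computing the monodromy of the covering $P\mathcal{H}\to\mathcal{M}_1$ and proves the orbit classification by an explicit Euclidean-style reduction to $(d',0)$, whereas you invoke the standard transitivity of $SL_2(\mathbb{Z}/d)$ on elements of fixed gcd; and where the paper simply asserts that for each fixed $(p,q)$ the parameter set is nonempty and connected, you supply the codimension-two wall argument. These are cosmetic differences in the same argument.
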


\begin{proof}

We first assume that $r+s=2$. Then $\mathcal{H}=\mathcal{H}(n,-n)$, for some $n\geq 2$ (the stratum $\mathcal{H}(1,-1)$ is empty). We have $d=n$. 
An element in $\mathcal{H}$ is given, up to a constant multiple, by a pair $(\overline{X},D)$, were $\overline{X}$ is a torus and $D=-nP+nQ$ is a divisor on $\overline{X}$. One can assume that $P=0$, and from the previous theorem, there is only a finite number of possibilities for $Q$, depending only on $n$. Hence, the map $(\overline{X},-n0+nQ)\mapsto (\overline{X},0)$ defines a covering from 
 $P\mathcal{H}$ to $\mathcal{M}_{1,1}$, where $\mathcal{M}_{1,1}$ denotes the moduli space of genus one Riemann surfaces with a marked point.

Fix $\overline{X}_0\in \mathcal{M}_{1,1}$ a regular point, and choose $v_1,v_2$ such that $\overline{X}_0=\mathbb{C}/(v_1 \mathbb{Z}+v_2\mathbb{Z})$. An element $(\overline{X}_0,\omega)\in \mathcal{H}$ is uniquely  defined by the coordinates of $Q$, which are given in a unique way by a complex number of the form $\frac{p}{n}v_1+\frac{q}{n}v_2$, with $(p,q)\neq (0,0)$ and $0\leq p,q<n$. 
Since $\overline{X}_0$ is taken regular, there is a one to one correspondance between  such pairs $(p,q)$ of integers and the elements of $\mathcal{H}$ whose underlying Riemann surface is  $\overline{X}_0$.

The monodromy of the covering $P\mathcal{H}\to \mathcal{M}_{1,1}$ is generated by the two maps  $\phi_1:(p,q)\to (p+q,q) \mod n$ and $\phi_2:(p,q)\to (p,q+p) \mod n$. We remark that $d'=\gcd(p,q,n)$ is invariant by this action and the condition on $(p,q)$ implies that $0<d'<n$. Hence $d'$ is an invariant of the connected components of $\mathcal{H}$. The number of possible $d'$ is $c-1$. We claim that each $(p,q)$ has a (unique) representative modulo these actions of the kind $(d',0)$. To prove the claim, we start from an element $(p,q)$ and do an algorithm similar to Euclid's algorithm. Without loss of generality, one can assume that $p\neq 0$ and $q\neq 0$. Applying $\phi_1^r$ for some suitably chosen $r$, we can obtain $(p',q)$ with $0<p'\leq \gcd(q,n)$. Similarly, we can obtain $(p,q')$ with $0<q<\gcd(p,n)$. So if either $\gcd(q,n)<p$ or $\gcd(p,n)<q$, we obtain $(p',q')$ with $p'+q'<p+q$. Otherwise $p\leq \gcd(q,n)\leq q$ and $q\leq \gcd(p,n)\leq p$. This implies $p=q=d'$, and the result follows.

Now we assume $r+q\geq 3$. We proceed in a similar way as before: we fix $\overline{X}_0=\mathbb{C}/\Gamma$ and a basis $v_1,v_2$ of $\Gamma$. Then a meromorphic differential is given by a a vector $(z_1,\dots z_{r},z_1',\dots ,z_{s}')\in \mathbb{C}^{r+s}$ with pairwise different entries (modulo $\Gamma$), and satisfying the linear equality $\sum_{i=1}^r n_i z_i -\sum_{i=1}^s p_i z_i=p v_1+ q v_2$. One can remark that:
\begin{itemize}
\item For each $(p,q)$ the set of $(z_i)_i,(z_j')_j$ satisfying the previous condition is nonempty and connected. 
\item If we choose other representatives $z_i,z_j'$ for the same differential $\omega$, this changes $(p,q)$ by $(p+\sum_i \alpha_i n_i+\sum_j \beta_j p_j,q+ \sum_{i} \alpha_i' n_i+\sum_j \beta_j' p_j)$, where $(\alpha_i,\beta_j, \alpha_i',\beta_j')$ can be any integers.
\item The action of the two generators of the modular group changes $(p,q)$ by $(p+q,q)$ and $(p,q+p)$ respectively. 
\end{itemize}
 Then, by a proof very similar to the previous one, one can see that $d'=\gcd(p,q,n_1,\dots ,n_r,p_1,\dots ,p_s)$ is an invariant of connected component and one can find representative in each connected component satisfying $(p,q)=(d',0)$. So the number of connected components is precisely $c$. 
 
 Note that the difference with the first case is that any pair $(p,q)\in \mathbb{Z}^2$ is possible.
\end{proof}

\subsection{Flat point of view: rotation number}
The previous section classifies the connected component of the moduli space of meromorphic differentials in the genus one case from a complex analytic point of view. 

But the invariant which is given is not easy to describe in terms of flat geometry.  The next theorem gives an interpretation in terms of flat geometry. 

Let $\gamma$ be a simple closed curve parametrized by the arc length on a translation surface that avoids the singularities. Then $t\to \gamma'(t)$ defines a map from $\mathbb{S}^1$ to $\mathbb{S}^1$. We denote by $Ind(\gamma)$ the index of this map.

\begin{definition}
Let $S=(X,\omega)\in \mathcal{H}(n_1,\dots ,n_r,-p_1,\dots -p_s)$  be a genus one translation surface with poles.
Let $(a,b)$ be a symplectic basis of the homology the underlying compact Riemann surface $\overline{X}$ and $\gamma_a,\gamma_b$ be arc-length representatives of $a,b$, with no self intersections, and that avoid the zeroes and poles of $\omega$. We define the \emph{rotation number} of $S$ to by:
$$rot(S)=\gcd(Ind(\gamma_a),Ind(\gamma_b),n_1,\dots n_r,p_1,\dots ,p_s)$$
\end{definition}

\begin{theorem}
Let $\mathcal{H}=\mathcal{H}(n_1,\dots ,n_r,-p_1,\dots -p_s)$ be a stratum of genus 1 meromorphic differentials. The rotation number is an invariant of connected components of $\mathcal{H}$. 

Any positive integer $d$ which divides $\gcd(n_1,\dots ,n_r,p_1,\dots p_s)$ is realised by a unique connected component of $\mathcal{H}$, except for the case $\mathcal{H}=\mathcal{H}(n,-n)$ where $d=n$ doesn't occur.
\end{theorem}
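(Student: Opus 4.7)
My plan proceeds in three steps: first establish that $rot(S)$ is well-defined and locally constant, then identify it with the complex-analytic invariant $d'=\gcd(p,q,n_1,\ldots,n_r,p_1,\ldots,p_s)$ from Theorem~\ref{th:g1:complex}, and finally conclude.

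For well-definedness, I would use that the translation structure trivialises $TS$ on $S\setminus\Sigma$, so that the integer $Ind(\gamma)=\deg(\gamma'\colon S^1\to S^1)$ is intrinsic for any smooth simple closed curve $\gamma$ avoiding $\Sigma$. Two disjoint simple representatives of the same homology class cobound a compact subsurface $A$ of $\overline X$ (after puncturing small disks around any poles it contains). Applying Gauss--Bonnet on $A$, together with the (elementary) computation that a small loop around a zero of order $n_i$ has index $\pm(n_i+1)$ and a small loop around a pole of order $p_j$ has index $\pm(p_j-1)$, one obtains
\[
Ind(\gamma_2)-Ind(\gamma_1)=\pm\sum_{\text{zeros in }A} n_i\pm\sum_{\text{poles in }A} p_j,
\]
which is a multiple of $\gcd(n_i,p_j)$. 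Hence $Ind(\gamma_a)\bmod \gcd(n_i,p_j)$ depends only on the homology class $a$. Invariance of $rot(S)$ under change of symplectic basis then reduces (after the linearity statement of the next paragraph) to $SL_2(\mathbb Z)$-invariance of the gcd of two integers. Local constancy on a stratum is immediate: in a continuous family $(S_t)$ one chooses $\gamma_a(t),\gamma_b(t)$ varying continuously and avoiding the moving singularities, so each $Ind(\gamma_\bullet(t))$ is an integer-valued continuous function of $t$, hence constant.

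The core of the proof is the identification $rot(S)=d'$. Following Theorem~\ref{th:g1:complex}, write $\overline X=\mathbb C/(v_1\mathbb Z+v_2\mathbb Z)$ and $\omega=h\,dz$, where $h$ is meromorphic on $\overline X$ with divisor $\sum n_i(z_i)-\sum p_j(z_j')$. For the straight loop $\gamma_a$ along $v_1$, the $\omega$-tangent vector at $\gamma_a(t)$ is $h(\gamma_a(t))\,v_1$, so $Ind(\gamma_a)$ equals the winding number $m_a=\tfrac{1}{2\pi i}\int_{\gamma_a}d\log h$ of $h\circ\gamma_a$ around $0$; similarly $Ind(\gamma_b)=m_b$. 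The essential computation is to apply the residue theorem to $z\cdot\tfrac{h'(z)}{h(z)}\,dz$ on the fundamental parallelogram with vertices $0,v_1,v_2,v_1+v_2$: using $\Gamma$-periodicity of $h'/h$, pairing opposite sides collapses the boundary integral to $\pm 2\pi i(v_1 m_b-v_2 m_a)$, while the residue sum equals $2\pi i\bigl(\sum n_iz_i-\sum p_jz_j'\bigr)$. By Abel's theorem the latter equals $2\pi i(pv_1+qv_2)$, and matching coefficients in the $\mathbb R$-basis $(v_1,v_2)$ gives $\{m_a,m_b\}=\{\pm p,\pm q\}$. In particular,
\[
rot(S)=\gcd(m_a,m_b,n_i,p_j)=\gcd(p,q,n_i,p_j)=d'.
\]

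Theorem~\ref{th:g1:complex} then immediately closes the argument: it shows that $d'$ realises exactly the positive divisors of $\gcd(n_i,p_j)$ on connected components of $\mathcal H$, with the single exception $d'=n$ in the stratum $\mathcal H(n,-n)$. The main obstacle I expect is the residue computation identifying the geometric windings $(m_a,m_b)$ with the Abel coefficients $(p,q)$; once that link is established, the rest is Gauss--Bonnet bookkeeping combined with the complex-analytic classification already in place.
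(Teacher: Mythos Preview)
Your argument is correct and takes a genuinely different route from the paper's. The paper never directly shows $rot(S)=d'$. Instead it (i) checks that $rot$ is well-defined and locally constant by the elementary observation that sliding a curve across a singularity of order $k$ changes its index by $\pm k$, and that the index is additive under the standard smoothing of two simple curves meeting once; (ii) \emph{constructs} surfaces with prescribed rotation number by bubbling a handle with parameter $k$ onto a genus-zero surface and then breaking up the zero, computing $rot$ directly on these models to be $\gcd(k,n_1,\dots,n_r,p_1,\dots,p_s)$; and (iii) concludes by counting: this produces at least as many distinct values of $rot$ as the number of components given by Theorem~\ref{th:g1:complex}, so the map from components to rotation numbers is a bijection onto the stated set of divisors.

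Your residue computation on the fundamental parallelogram is more conceptual: it explains \emph{why} the flat invariant and the Abel invariant coincide, rather than merely that they index the same set. Two small points are worth making explicit. First, the identity $Ind_\omega(\gamma)=\tfrac{1}{2\pi i}\int_\gamma d\log h$ holds for your straight-line representative because such a loop has zero turning in the $dz$-structure; in general $Ind_\omega(\gamma)=Ind_{dz}(\gamma)+\tfrac{1}{2\pi i}\int_\gamma d\log h$, and you are using that $Ind_{dz}=0$ for simple non-separating curves on the flat torus. Second, your ``linearity statement of the next paragraph'' for basis-change invariance is simply that $\gamma\mapsto\tfrac{1}{2\pi i}\int_\gamma d\log h$ is a homomorphism on $H_1(\overline{X}\setminus\Sigma;\mathbb Z)$, so $(m_a,m_b)$ transforms by the $SL_2(\mathbb Z)$ matrix and $\gcd$ is preserved; this is logically independent of the residue identity and could be stated up front. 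What the paper's route buys, and yours does not, is the explicit realisation of each component via the bubbling-a-handle construction (Proposition~\ref{g1:cyl}), which is used repeatedly in the higher-genus arguments of Sections~\ref{sec:minimal}--\ref{sec:nonminimal}.
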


\begin{proof}
Let $(a,b)$ be a symplectic basis of $H_1(\overline{X},\mathbb{Z})$. 
Let $\gamma_a, \gamma_a'$ be representatives of $a$ that are simple closed curves and don't contain a singularity. Since $\overline{X}$ is a torus, $\gamma_a$ and $\gamma_a'$ are homotopic as curves defined on $\overline{X}$. The index of $\gamma_a$ doesn't change while we deform $\gamma_a$ without crossing a pole or a zero.
It is easy to see that when crossing a singularity of order $k\in \mathbb{Z}$, the index is changed by adding $\pm k$. Hence the rotation number only depend on the homology class of $a$ and $b$. 

If $\gamma_a$ and $\gamma_b$ intersects in one point, then there is a standard way to construct a simple closed curve representing $a\pm b$. Its index is $Ind(\gamma_a)\pm Ind(\gamma_b)$, and we obtain representatives of the symplectic basis $(a\pm b,b)$ (or $(a,a\pm b)$). The rotation number doesn't change by  this operation.
With this procedure, we can obtain any other symplectic basis of $\overline{X}$.

Hence the rotation number is well defined for a given element of $\mathcal{H}$. Also,   it is invariant by deforming $(\overline{X},\omega)$ inside the ambiant stratum, since by continuous deformation, we can keep track of a pair of representatives of a basis, and the indices are constant under continuous deformations. 

To prove the last part of the theorem, we remark that a surface in $\mathcal{H}(n,-p_1,\dots ,-p_s)$ obtained from $\mathcal{H}(n-2,-p_1,\dots ,-p_s)$ by bubbling a handle with parameter $k\in \{1,\dots ,n-1\}$ has a rotation number equal to  $\gcd(k,p_1,\dots ,p_s)$ by a direct computation.  Since $k<n$, we have $\gcd(k,p_1,\dots ,p_s)<n$ so $n$ is never a rotation number. Now we break up the singularity of order $n$ to get $r$ singularities of order $n_1,\dots ,n_r$. Since this doesn't change the metric outside a small neighborhood of the singularity of order $n$, we obtain a rotation number equal to $\gcd(k,n_1,\dots ,n_r,p_1,\dots ,p_s)$.

The previous construction gives at least as many connected component as the number given by  Theorem~\ref{th:g1:complex}. So, we see each rotation number is realized by a unique component, and that this component is realized by the bubbling a handle construction.
\end{proof}

Note that the last two paragraphs of the proof of  the last theorem gives the following description of the connected components of the minimal strata in genus one.

\begin{proposition} \label{g1:cyl}
Let  $\mathcal{H}=\mathcal{H}(n,-p_1,\dots ,-p_s)$  be a minimal stratum of genus one meromorphic differentials. 
Any connected component of $\mathcal{H}$ is of the form $\mathcal{H}_0 \oplus k$, for some $1\leq k \leq n-1 $, where $\mathcal{H}_0$ is the connected stratum $\mathcal{H}(n-2,-p_1,\dots ,-p_s)$.

Also, for $1\leq k_1,k_2 \leq n-1$ we have: $$\mathcal{H}_0\oplus k_1=\mathcal{H}_0\oplus k_2$$  if and only if $\gcd(k_1,p_1,\dots ,p_s)=\gcd(k_2,p_1,\dots ,p_s)$.
\end{proposition}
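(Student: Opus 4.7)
The plan is to read both assertions directly off the computation performed in the last two paragraphs of the preceding theorem, combined with the component count from Theorem~\ref{th:g1:complex}.

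First I would recall the key input: the source stratum $\mathcal{H}(n-2,-p_1,\dots,-p_s)$ has genus zero, since $(n-2)-\sum p_j=-2$, and is therefore connected by the classical result cited in the introduction. The bubbling-a-handle construction of Section~\ref{bubbling} with parameter $k\in\{1,\dots,n-1\}$ then yields a surface in $\mathcal{H}(n,-p_1,\dots,-p_s)$ of rotation number $\gcd(k,p_1,\dots,p_s)$, as computed in the proof of the previous theorem.

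For the first assertion I would show that, as $k$ ranges over $\{1,\dots,n-1\}$, the value $\gcd(k,p_1,\dots,p_s)$ sweeps out precisely the set of positive divisors $d'$ of $d=\gcd(n,p_1,\dots,p_s)$ which satisfy $d'<n$. Surjectivity onto this set is realized by taking $k=d'$, which automatically divides every $p_j$; the upper bound $d'<n$ is forced by $k\leq n-1$. A small bookkeeping case split then matches this count to Theorem~\ref{th:g1:complex}: if $s\geq 2$, the equality $\sum p_j=n$ forces some $p_j<n$, so $d<n$ and all $c$ divisors of $d$ are realized, agreeing with the $c$ connected components; if $s=1$ then $p_1=n$ and $d=n$, so exactly $c-1$ divisors are realized, again matching. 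Since the rotation number is a connected component invariant, bubbling hits every connected component, proving the first claim.

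For the second assertion I would invoke the well-definedness of $\mathcal{C}\oplus k$ recalled in Section~\ref{bubbling}: the source stratum is connected, so $\mathcal{H}(n-2,-p_1,\dots,-p_s)\oplus k$ is an unambiguous connected component of $\mathcal{H}(n,-p_1,\dots,-p_s)$. Then $\oplus k_1=\oplus k_2$ holds if and only if the two bubbled surfaces lie in the same component, which by the previous theorem is equivalent to their rotation numbers coinciding, that is, $\gcd(k_1,p_1,\dots,p_s)=\gcd(k_2,p_1,\dots,p_s)$.

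The only real subtlety is the off-by-one in the $s=1$ case, where $d=n$ cannot itself arise as a rotation number because $k$ is constrained to be at most $n-1$; it is reassuring that this missing divisor matches exactly the $-1$ in Theorem~\ref{th:g1:complex}, so no additional argument is needed.
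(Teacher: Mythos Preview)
Your argument is correct and is essentially the paper's own proof: the paper simply notes that the proposition is a direct restatement of the last two paragraphs of the preceding theorem's proof, and you have unpacked exactly those paragraphs, together with the small divisor-counting case split $s=1$ versus $s\geq 2$ (using $n=\sum_j p_j$) needed to match the count of Theorem~\ref{th:g1:complex}. There is nothing to add.
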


\begin{remark}
It is shown in the appendix that there are some translation surface with pole that do not contain any closed geodesic.
\end{remark}

\section{Spin structure and hyperelliptic components}\label{invariants}
Recall that in the classification of the connected components of strata of the moduli space of Abelian differentials \cite{KoZo}, the connected components are distinghished by two invariants. 
\begin{itemize}
\item ``Hyperelliptic components'': there are some connected components whose corresponding translation surfaces all have an extra symmetry.
\item ``The parity of the spin structure'', which is a complex invariant that can be expressed in terms of the flat geometry by a simple formula.
\end{itemize}
\subsection{Hyperelliptic components}
\begin{definition}
A translation surface with poles $S$ is said to be \emph{hyperelliptic} if there exists an isometric involution $\tau:S\to S$ such that $S/\tau$ is a sphere. Equivalently, the underlying Riemann surface $\overline{X}$ is hyperelliptic and the hyperelliptic involution $\tau$ satisfies $\tau^* \omega=-\omega$.
\end{definition}
\begin{remark}
In the case of Abelian differentials, if the underlying Riemann surface is hyperelliptic, then the translation surface is hyperelliptic since there are no nonzero holomorphic one forms on the sphere. In our case, similarly to the case of quadratic differentials, the underlying Riemann surface might be hyperelliptic, while the corresponding translation surface is not.
\end{remark}

\begin{proposition}\label{list:hyp}
Let $n,p$ be positive integers with $n\geq p$. The following strata admit a connected component that consists only of hyperelliptic translation surfaces.
\begin{itemize}
\item $\mathcal{H}(2n,-2p)$
\item $\mathcal{H}(2n,-p,-p)$
\item $\mathcal{H}(n,n,-2p)$
\item $\mathcal{H}(n,n,-p,-p)$
\end{itemize}
Furthermore, any stratum that contains an open set of flat surfaces with a nontrivial isometric involution is in the previous list for some $n\geq p\geq 1$.

\end{proposition}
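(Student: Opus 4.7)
The plan is to establish, in order, non-emptiness of the hyperelliptic locus in each of the four listed strata, openness and closedness (hence that the locus is a union of components), connectedness, and finally the converse classification.

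For non-emptiness, I would construct hyperelliptic representatives as double covers. Given a hyperelliptic translation surface $(X,\omega)$ with involution $\tau$, the square $\omega^2$ descends to a meromorphic quadratic differential $q$ on $Y=X/\tau\cong \mathbb{CP}^1$. A local branched-cover calculation shows that a zero of $q$ of order $m$ located at a branch point lifts to a zero of $\omega$ of order $m+1$, while at a non-branch point it lifts to two zeros of order $m$, with the analogous statement for poles. Reversing this: starting from a suitable $q$ on $\mathbb{CP}^1$ with prescribed singularities placed at branch or non-branch points of a double cover (the remaining branch points, whose number is determined by Riemann--Hurwitz, correspond to the simple poles of $q$), one obtains an explicit $(X,\omega)$ in each of the four strata $\mathcal{H}(2n,-2p)$, $\mathcal{H}(2n,-p,-p)$, $\mathcal{H}(n,n,-2p)$, $\mathcal{H}(n,n,-p,-p)$.

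To show the hyperelliptic locus $\mathcal{H}^{\mathrm{hyp}}$ is a union of connected components, I would verify openness via a dimension count. By Lemma~\ref{lem:dim}, $\dim\mathcal{H}(n_1,\dots,n_r,-p_1,\dots,-p_s)=2g+r+s-2$, while the hyperelliptic locus is locally parametrized by the moduli of the quotient quadratic differential on the sphere, whose stratum has complex dimension $2g_Y+k-2$ with $g_Y=0$ and $k$ equal to the number of singular orbits. A case-by-case check shows the numbers match (for instance, $\mathcal{H}(2n,-2p)$ has dimension $2n-2p+2$, which agrees with the quotient stratum $\mathcal{Q}(2n-1,-2p-1,-1^{2n-2p+2})$ on $\mathbb{CP}^1$). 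Closedness follows because a limit of hyperelliptic surfaces in a fixed stratum still carries an involution, extracted as a limit of the converging $\tau_i$. Connectedness follows because the locus is parametrized by the (connected) configuration space of points on the sphere modulo $PGL(2)$.

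For the converse, assume $\mathcal{H}$ contains an open set of surfaces admitting a nontrivial isometric involution $\tau$. Since $\tau$ preserves orientation, $\tau^*\omega=\lambda\omega$ with $\lambda\in\{\pm 1\}$. The case $\lambda=+1$ would make $\tau$ a translation automorphism, but surfaces with such automorphisms form a proper sublocus (they have smaller generic modulus than the ambient stratum), contradicting openness. So $\tau^*\omega=-\omega$. A dimension comparison between $\mathcal{H}$ and the stratum of the descended quadratic differential on $Y=X/\tau$—using $\dim\mathcal{H}=2g_X+r+s-2$, the analogous formula on $Y$, and the Riemann--Hurwitz count $N=2g_X-4g_Y+2$ for the fixed points—forces $g_Y=0$. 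Then a local analysis at fixed points, where the involution acts as $z\mapsto -z$ and hence forces the singularity order to be even, combined with the pairing of the other singularities into free $\tau$-orbits of size two, leaves only the four listed configurations. The main obstacle is this last combinatorial step: one must rule out by hand the near-miss configurations in which mixing fixed and free orbits would give a stratum with more zeros or poles than allowed, and verify that the dimension inequality is saturated precisely in the four cases.
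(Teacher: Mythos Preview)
Your approach is correct but genuinely different from the paper's. The paper works entirely through the infinite zippered rectangle construction (Proposition~\ref{no:vertical}): it shows that for any combinatorial datum $\sigma$, the open chart $\mathcal{C}_\sigma$ is either entirely inside the hyperelliptic locus or disjoint from it, because for generic $\zeta$ the involution must send each segment $\zeta_i$ to itself, pairing $D^+(z_1,\dots,z_k)$ with $D^-(z_k,\dots,z_1)$, and this then holds for all $\zeta$. Open- and closedness follow at once. For the converse the paper counts fixed points flatly: the $n$ midpoints of the $\zeta_i$ are fixed, so $n\le \#\mathrm{Fix}(\tau)=2g+2-4g'$; combined with $n=2g+r+s-2$ (Lemma~\ref{lem:dim}) this forces $g'=0$ and $\#\mathrm{Fix}(\tau)-n=4-r-s$, leaving only the listed shapes.

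Your route via the quotient quadratic differential and a direct dimension match is the more classical one, closer in spirit to how hyperelliptic components are handled for Abelian and quadratic differentials. It buys you connectedness of the hyperelliptic locus for free (from connectedness of genus-zero quadratic strata), which the paper's argument does not yield and the proposition does not actually assert. Conversely, the paper's argument is entirely self-contained within the flat tools already built. Two spots in your outline would benefit from a line more of detail: ruling out $\tau^*\omega=+\omega$ deserves an actual dimension computation for the locus of translation double covers rather than the phrase ``smaller generic modulus''; and extracting a limiting involution on infinite-area surfaces is not quite immediate---a cleaner substitute is to observe that once the dimensions agree, the hyperelliptic locus is the image of an open map from the (connected) quotient stratum, hence is itself open, and is closed because it is cut out by an analytic condition. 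Finally, your ``near-miss'' worry in the last combinatorial step is unfounded: the equality $2-2g_Y=r_1+r_2+s_1+s_2$ forced by matching dimensions, together with $r_1+r_2\ge 1$ and $s_1+s_2\ge 1$, gives $g_Y=0$ and $r_1+r_2=s_1+s_2=1$ cleanly, yielding exactly the four cases.
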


\begin{proof}
Let $\mathcal{H}$ be a stratum  and $\mathring{\mathcal{H}}^{hyp}\subset \mathcal{H}$ the interior of the set of elements of $\mathcal{H}$ that admit a nontrivial isometric involution. 

Given a combinatoria datum $\sigma=(\mathbf{n}^+,\mathbf{n}^-,\pi_t,\pi_b,\mathbf{d})$ that defines an infinite zippered rectangle construction, we denote by $\mathcal{C}_\sigma$ the set of flat surfaces that are obtained by this construction with parameter $\sigma$, up to a rotation. Clearly, $\mathcal{C}_\sigma$ is open and connected. 

We claim that for each $\sigma$, the intersection between $\mathcal{C}_\sigma$ and $\mathring{\mathcal{H}}^{hyp}$ is either $\mathcal{C}_\sigma$ or empty. Indeed, choose a generic parameter $\zeta$ for the infinite zippered rectangle construction, such that the corresponding surface $S(\sigma,\zeta)$ is in $\mathring{\mathcal{H}}^{hyp}$. Let $D^+(z_1,\dots ,z_k)\subset S(\sigma,\zeta)$ be a half-plane of the construction. Then, $\zeta$ being generic, an isometric involution $\tau$ will necessarily send the segment corresponding to $z_i$ to itself. Hence if $\tau$ is not the identity, it is easy to see that the set $D^+(z_1,\dots ,z_k)$ will be sent to $D^-(z_k,z_{k-1},\dots , z_1)$, and therefore, we can define a similar involution for \emph{any} value of $z_1,\dots ,z_k$. Since this argument is valid for any $D^{\pm}$ and $C^{\pm}$ components, we see that all flat surfaces obtained by the infinite zippered rectangle construction with combinatorial datum $\sigma$ have a nontrivial isometric involution. This proves the claim.

Now we remark that, by Proposition~\ref{no:vertical}, $\mathcal{H}=\cup_\sigma \mathcal{C}_\sigma$, where the union is taken on all $\sigma$ that corresponds to $\mathcal{H}$. The previous claim implies that $\mathring{\mathcal{H}}^{hyp}$ and its complement in $\mathcal{H}$ are both unions of some $\mathcal{C}_\sigma$, so if $\mathring{\mathcal{H}}^{hyp}$ is nonempty, it is a connected component of $\mathcal{H}$.

Now we check that if $\mathring{\mathcal{H}}^{hyp}$ is not empty, then the stratum $\mathcal{H}$ is in the given list, \emph{i.e.} there is either one even degree zero (resp. pole) or two zeroes (res. poles) of equal degree.
Let $\zeta_1,\dots ,\zeta_n$ be the continuous data in the infinite zippered rectangle construction for an element $S$ in $\mathring{\mathcal{H}}^{hyp}$. The above condition implies that for each $\zeta_i$, the middle of the corresponding segment in the surface is a fixed point for the involution $\tau$. So, there are at least $n$ fixed points. Let  $r$ be the number of conical singularities, let $s$ be the number of poles and let $g'$ be the genus of $S/\tau$. We must have $\#(Fix(\tau))=2g+2-4g'$, and $2g+r+s-2=n\leq \#(Fix(\tau))$ (see Lemma~\ref{lem:dim}). Since $r,s>1$, this implies $g'=0$, so $S$ is hyperelliptic, and $\#(Fix(\tau))-n=4-r-s$. The fixed points of $\tau$ in $\overline{X}$ that do not correspond to the middle of a $z_i$ segment are necessarily either conical singularities or pole. 

The above combinatorial condition on the infinite zippered rectangle construction implies that $S$ has either two equal degree poles that are interchanged by $\tau$ or one pole of even degree that is preserved by $\tau$. So the condition $\#(Fix(\tau))-n=4-r-s$ implies that either there is one conical singularity which is fixed by $\tau$, or there are two singularities $P_1, P_2$ that are not fixed by $\tau$. By a similar argument as in the proof of Proposition~\ref{adj:minimal}, $P_1,P_2$ are the endpoints of a saddle connection corresponding to a parameter $\zeta_i$, so they are interchanged by $\tau$, hence they are of the same degree. Therefore, the stratum is necessarily one of the given list.

The last step of the proof is to check that for the strata given in the statement, $\mathring{\mathcal{H}}^{hyp}$ is nonempty. This is an elementary check by using the infinite zippered rectangle construction that satisfies the previous condition. 

\end{proof}

\subsection{The parity of the spin structure for translation surfaces with even singularities}\label{subsec:spin}
\subsubsection{Spin structures on a surface}
There are two equivalent definitions of spin structure for a compact Riemann surface $X$ commonly used.

The first one is topological: let  $P$ be the $\mathbb{S}^1$ bundle of directions of nonzero tangent vectors to $X$. A \emph{spin structure} on $X$ is a two-to-one covering $Q\to P$, whose restriction to a $\mathbb{S}^1$ fiber is the usual double covering $\mathbb{S}^1\to \mathbb{S}^1$. It is equivalent to a morphism $\xi:H_1(P,\mathbb{Z}/2\mathbb{Z})\to \mathbb{Z}/2\mathbb{Z}$ such that the image of the cycle $z$ corresponding to a fiber is one. Indeed, in this case the monodromy $\pi_1(P)\to \mathbb{Z}/2\mathbb{Z}$  factors  to a map $\xi: H_1(P, \mathbb{Z}/2\mathbb{Z})\to \mathbb{Z}/2\mathbb{Z}$.

The second equivalent definition comes from algebraic geometry (see \cite{Ati}). A \emph{theta characteristic}, is a solution of the equation $2D=K$ in the divisor class group, where $K$ is the canonical divisor. Equivalently, it is  a complex line bundle $L$ such that $L\otimes L\sim \Omega_1(X)$. For such $L$, Atiyah and Munford showed independently \cite{Ati,Mum} that the dimension modulo 2 of the vector space of holomorphic sections of $L$ is invariant by deformation of the complex structure. This is the \emph{parity of the spin structure}.

In  \cite{Johnson}, Johnson provides a topological way to compute this invariant. He first constructs a lift $C\mapsto \widetilde{C}$ from $H_1(X,\ZZ)$ to $H_1(P,\ZZ)$. We refer \cite{Johnson} for details on the construction of the lift. In our case, it is enough to observe that when $C=[\gamma]$ is the class of a simple closed curve, the lift of $C$ is $\widetilde{C}=[\ha{\gamma}]+z$  where $\ha{\gamma}$ is the natural lift obtained by framing $\gamma$ with its speed vector $\gamma'$, and $z$ is the class of a $\mathbb{S}^1$ fiber.

The composition of this lift with the map $\xi$ gives a quadratic form $\Omega:H_1(X,\ZZ) \to \ZZ$, $\Omega(C):=\xi(\widetilde{C})$. Johnson then shows that the parity of the spin structure is equal to the Arf invariant of $\Omega$, \emph{i.e.} for a symplectic basis $(a_1,b_1),\dots ,(a_g,b_g)$ of $H_1(X,\ZZ)$, the parity of the spin structure is
$$\sum_{i=1}^g \Omega(a_i)\Omega(b_i).$$

\subsubsection{The parity of the spin structure for translation surfaces with even singularities}

A translation surface $(X,\omega)$ with even poles and zeroes naturally gives a spin structure on $X$ in the following way: let $(\omega)=\sum_i 2n_i N_i-\sum_j 2p_jP_j $ the divisor associated to $\omega$. Then $D=\sum n_i N_i-\sum_j P_j$ satisfies $2D=K$.  From the results of Atiyah and Munford in \cite{Ati, Mum}, it follows that the parity of the spin structure is a locally constant function on the strata where it is defined. Hence, it is an invariant of connected components, for strata with even poles and zeroes.

The parity of the spin structure can be easily computed by using Johnson's construction.
Following \cite{KoZo}, it is easy to see that the corresponding map $\Omega$ satisfies, for $\gamma$ a simple closed curve, $\Omega([\gamma])=ind(\gamma)+1$. Hence, the parity of the spin structure for a translation surface (with poles) is:
$$\sum_{i=1}^g (ind(a_i)+1)(ind(b_i)+1).$$

\subsection{The parity of the spin structure for translation surfaces with only two simple poles}\label{spin:poles:simples}

Let $S=(X,\omega)\in \mathcal{H}(2n_1,\dots ,2n_r,-1,-1)$ be a translation surface with zeroes of even order and a pair of simple poles (and no other poles). Since there are odd degree singularities, $\omega$ does not define a spin structure on $X$. However, one can still define a topological invariant, that will be the parity of the spin structure on another surface $S'$.

Recall that a neigborhood a simple pole is an infinite cylinder. Choose a pair of waist curves $\gamma_1,\gamma_2$ on each cylinder associated to the two simple poles. Since there are no other poles than the pair of simple poles, the two have opposite residues by Stokes' theorem, 
hence  $\gamma_1,\gamma_2$ are isometric. Now we cut the surface $S$ along $\gamma_1$ and $\gamma_2$. We obtain a compact translation surface with two geodesic boundary components. The condition on the residues implies that gluing together these boundary components by a translation gives a translation surface $S'$, where the pair of infinite cylinders in $S$ corresponds to a finite cylinder $C\subset S'$. The surface $S'$ belongs to the stratum $\mathcal{H}(2n_1,\dots ,2n_r)$ where the spin structure was defined by Kontsevich and Zorich. Note that other choices for $\gamma_1,\gamma_2$, and for the gluing operation only change the length and twist of $C$, hence gives the same connected component of $\mathcal{H}(2n_1,\dots ,2n_r)$.
We will call \emph{the parity of the spin structure of S} the parity of the spin structure of the corresponding translation surface $S'$.

\begin{remark}
Note that one can also define the parity of the spin structure of $S=(X,\omega)$ in a more algebro-geometric way: we consider the stable curve $\overline{X}$ obtained by gluing together the two poles.
 In \cite{Cornalba}, Cornalba extends to a large class of stable curves (including this case) the notion of spin structure, and shows that the parity of the spin structure is invariant by deformations. 
The one form $\omega$ on $X$ can then be used to define on $\overline{X}$ a spin structure.
\end{remark}


\section{Higher genus case: minimal stratum} \label{sec:minimal}

Recall that a minimal stratum correspond to the case where there is only one conical singularity (and possibly several poles). As in the papers of Kontsevich-Zorich  \cite{KoZo} and Lanneau  \cite{La:cc}, we first describe the connected components of minimal strata. The idea is similar: show that each such strata is obtained by bubbling $g$ cylinders and compute the connected components in this case.

The first step is to find a surface obtained by bubbling a handle. In \cite{KoZo} and in \cite{La:cc} is used a rather combinatorial argument. A similar approach is possible in our case by using the infinite zippered rectangle construction, but this is quite technical. 
Another possibility is to reduce the problem to the genus one case for which it was proven in Section~\ref{genus1} that any minimal stratum contains a surface obtained by bubbling a handle.

\begin{proposition} \label{exists:cyl}
Let $\mathcal{C}$ be a connected component of the stratum $\mathcal{H}(n,-p1,\ldots,-p_s)$. We assume that the genus $g$ is nonzero. Then, there exists a flat surface in $\mathcal{C}$ which is obtained by bubbling a handle from a genus $g-1$ flat surface.
\end{proposition}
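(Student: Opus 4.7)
My plan is to argue by induction on the genus $g$, with the base case $g = 1$ supplied directly by Proposition~\ref{g1:cyl} (every connected component of a minimal genus-one stratum is obtained by bubbling a handle from a connected genus-zero stratum). The remainder of the work is the inductive step.

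For the inductive step, fix $g \geq 2$ and $S \in \mathcal{C}$. The first move is to apply the singularity-breaking surgery of Section~\ref{bzero:loc} to split the unique degree-$n$ conical singularity of $S$ into two singularities, of degrees $n-2$ and $2$, joined by a short saddle connection $\gamma$ of length $\varepsilon$. This produces a surface $S' \in \mathcal{H}(n-2, 2, -p_1, \ldots, -p_s)$ of the same genus $g$; because the surgery is local, collapsing $\gamma$ (letting $\varepsilon \to 0$) returns $S$, so any surface in the same connected component of $S'$ whose ``short'' saddle connection can be collapsed yields a surface in $\mathcal{C}$.

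The heart of the argument is then to exhibit, inside (a deformation of) $S'$, a simple closed curve $\delta$ through the degree-$(n-2)$ singularity that is non-separating on the underlying $\overline{X}$, homologically independent of $\gamma$, and which bounds together with $\gamma$ a genus-one sub-surface $T$ containing the degree-$2$ singularity and no poles. Viewing the closing of $T$ (in the sense of Section~\ref{spin:poles:simples}) as a minimal genus-one translation surface with poles, Proposition~\ref{g1:cyl} asserts that $T$ is itself obtained by bubbling a handle; in particular, $\delta$ can be chosen to coincide with a boundary curve of the bubbled cylinder, so that the two-sided angle condition $k \cdot 2\pi$ / $(n-k) \cdot 2\pi$ at the degree-$(n-2)$ singularity is met. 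Cutting $S'$ along $\delta$ and regluing exhibits $S'$ as a bubbling onto a genus-$(g-1)$ surface in $\mathcal{H}(n-4, 2, -p_1, \ldots, -p_s)$; collapsing $\gamma$ then produces a surface in $\mathcal{C}$ manifestly obtained by bubbling a handle from a genus-$(g-1)$ surface in $\mathcal{H}(n-2, -p_1, \ldots, -p_s)$ (nonempty by Riemann--Roch and the hypothesis $\sum p_j > 1$).

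The main obstacle is the existence of $\delta$ with the requisite flat-geometric properties---in particular, that its angle split at the singularity genuinely matches the bubbling construction of Section~\ref{bubbling}, rather than being a merely topological curve. For this I would use the infinite zippered rectangle description: after a generic rotation ensuring no vertical saddle connection, Proposition~\ref{no:vertical} represents $S'$ by continuous parameters $\zeta \in \mathbb{C}^n$ with $n = 2g + s$ (Lemma~\ref{lem:dim}), and by Remark~\ref{rq:param} continuous deformations of $\zeta$ preserve the connected component. I would then shear the $\zeta$-parameters so as to shorten a chosen non-separating curve through the singularity until it becomes the core of an embedded flat cylinder, and invoke the genus-one case to verify that the resulting local model is indeed a bubbled handle. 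The delicate point---and the place where the technical care of the argument is concentrated---is arranging simultaneously the shortness of $\gamma$ from the breaking surgery and the geometric realization of $\delta$.
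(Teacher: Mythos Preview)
Your proposal has a genuine gap, and the scaffolding you build around it (induction on $g$, breaking the singularity) does not help close it.

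The central unfulfilled claim is the existence of the curve $\delta$. You want $\delta$ to pass through the degree-$(n-2)$ singularity, to be non-separating, and together with $\gamma$ to cut off a genus-one piece $T$ containing the degree-$2$ singularity and no poles, \emph{and} to have the correct angular data at the singularity so that it is literally the boundary of a bubbled cylinder. You acknowledge this as ``the main obstacle'' and then defer it to ``shear the $\zeta$-parameters so as to shorten a chosen non-separating curve \dots until it becomes the core of an embedded flat cylinder''. But that sentence is the entire content of the proposition; nothing you wrote before it reduces the difficulty. Moreover, once you have broken the zero, the subsurface $T$ you describe carries \emph{two} conical singularities (degrees $n-2$ and $2$) plus a boundary, so Proposition~\ref{g1:cyl}, which concerns \emph{minimal} genus-one strata, does not apply to it as stated; and the ``closing'' of Section~\ref{spin:poles:simples} is about gluing two simple poles, not about capping a boundary component, so that reference does not do what you want either. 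Finally, even if $S'$ were exhibited as a handle bubbled onto some $S_0''\in\mathcal{H}(n-4,2,\ldots)$, collapsing $\gamma$ need not commute with that description, since $\gamma$ sits inside the bubbled part.

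The paper's argument is both shorter and avoids all of this. Work directly with $S$ (no induction, no breaking). Rotate so there is no vertical saddle connection and apply Proposition~\ref{no:vertical}: the $\zeta_i$ give saddle connections $\gamma_1,\ldots,\gamma_n$ from the unique zero to itself, with $n=2g+s-1$ (Lemma~\ref{lem:dim}), which generate $H_1(S,\mathbb{Z})$ because their complement is a union of punctured disks. Since $g\geq 1$, some pair $\gamma_i,\gamma_j$ has intersection number $\pm 1$. Now shrink $\zeta_i$ and $\zeta_j$ simultaneously (staying in $\mathcal{C}$ by Remark~\ref{rq:param}). A neighborhood of $\gamma_i\cup\gamma_j$ is then isometric to the complement of a pole-neighborhood in a surface in $\mathcal{H}(n,-n)$, and Proposition~\ref{g1:cyl} applied to \emph{that} genus-one surface produces the bubbled-handle structure. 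The key idea you were missing is to isolate a genus-one piece by shrinking a \emph{pair} of transversely intersecting saddle connections already present in the zippered-rectangle data, rather than manufacturing one via a surgery.
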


\begin{proof}
We start from a surface in $\mathcal{C}$ obtained by the infinite zippered rectangle construction. It is defined by a combinatorial data and a continous parameter $\zeta\in\mathbb{C}^n$, with $n=2g+s-1$.

Each $\zeta_i$ defines a closed geodesic path $\gamma_i$ joining the conical singularity to itself. The intersection number between any two such paths is $0$ or $\pm 1$.  The genus is higher than zero and $\{\gamma_1,\ldots,\gamma_n\}$ generates the whole homology space $H_1(S,\mathbb{Z})$ since the complement is a union of punctured disks. Hence, there is a pair $\gamma_i,\gamma_j$ whose intersection number is one. 

Now we shrink $\zeta_i,\zeta_j$ until they are very small compared to all the other parameters. Then, we observe that a neighborhood of $\gamma_i,\gamma_j$ is isometric to the complement of a neighborhood of a pole for a surface in $\mathcal{H}(n,-n)$. Then, deforming suitably the surface,  using Proposition~\ref{g1:cyl}, one obtains the desired result.
\end{proof}

We recall the notation introduced in Section~\ref{bubbling}. Let $\mathcal{C}$ is a connected component of a minimal stratum $\mathcal{H}(n,-p_1,\dots ,-p_s)$. Let $s\in \{1,\dots ,n+1\}$. The set $\mathcal{C}\oplus s$ is the connected component of the stratum $\mathcal{H}(n+2,-p_1,\dots ,-p_s)$ obtained by bubbling a handle after breaking the singularity of order $n$ into two singularities of order $(s-1)$ and $(n+1-s)$.

The proposition that follows uses roughly the same arguments as in \cite{KoZo} and \cite{La:cc}. The only difference is the case when $n$ is odd, which does not occur for Abelian or quadratic differentials. 

\begin{proposition} \label{min:strat:upper:bound}
Let $\mathcal{H}(n,-p_1,\dots ,-p_s)$ be a stratum of meromorphic differentials genus $g\geq 2$ surfaces, and denote by $\mathcal{C}_0$ the unique component of $\mathcal{H}(n-2g,-p_1,\dots ,-p_s)$. The following holds:
\begin{itemize}
\item If $n$ is odd, the stratum $\mathcal{H}(n,-p_1,\dots ,-p_s)$ is connected.
\item If $n$ is even, the stratum $\mathcal{H}(n,-p_1,\dots ,-p_s)$ has at most three connected components which are in the following list:
\begin{itemize}
\item $\mathcal{C}_0\oplus  (\frac{n-2g}{2}+1) \oplus  (\frac{n-2g}{2}+2)\oplus\dots \oplus (\frac{n-2g}{2}+g)$
\item $\mathcal{C}_0\oplus 1\oplus\dots \oplus 1\oplus 1$
\item $\mathcal{C}_0\oplus 1\oplus\dots \oplus 1\oplus 2$
\end{itemize}

\end{itemize}
\end{proposition}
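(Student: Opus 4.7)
My plan is to adopt the Kontsevich--Zorich / Lanneau strategy: first reduce every connected component of $\mathcal{H}(n,-p_1,\ldots,-p_s)$ to a sequence of handle bubblings over the (unique) genus zero component $\mathcal{C}_0$ of $\mathcal{H}(n-2g,-p_1,\ldots,-p_s)$, and then use Lemma~\ref{lemme:lanneau} to compress the list of admissible sequences down to the three canonical tuples (or just one, in the odd case) appearing in the statement. For the first reduction I shall apply Proposition~\ref{exists:cyl} to any surface in a component $\mathcal{C}$, producing a surface of the form $S'\oplus s_g$ with $S'$ in $\mathcal{H}(n-2,-p_1,\ldots,-p_s)$ and $s_g\in\{1,\ldots,n-1\}$. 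Since the operation $\oplus s$ only depends on the connected component of the base, this yields $\mathcal{C}=\mathcal{C}_{g-1}\oplus s_g$, and iterating by downward induction on the genus gives
\[
\mathcal{C}=\mathcal{C}_0\oplus s_1\oplus\cdots\oplus s_g,
\]
with admissible entries $s_i\in\{1,\ldots,n-2g+2i-1\}$.

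For the second reduction I plan to use the four identities of Lemma~\ref{lemme:lanneau}. The symmetry rule~(4) lets me assume $s_i\le\tfrac{d_{i-1}}{2}+1$, where $d_{i-1}=n-2g+2(i-1)$ is the degree of the singularity just before the $i$-th bubbling. The commutation rule~(1), the shift rule~(2), and the decrement rule~(3) then allow me to drag non-minimal entries leftward and compress them against minimal ones. When $n$ is odd, every degree $d_{i-1}$ is odd, so $\tfrac{d_{i-1}}{2}+1$ is never an integer; the exceptional pair forbidden in~(1) never occurs, commutation is unobstructed, and a direct combinatorial reduction collapses any admissible tuple to $(1,\ldots,1)$, proving connectedness.

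The even case is where the main difficulty lies. When $n$ is even, rule~(1) fails precisely on the pair $\{\tfrac{d}{2}+1,\tfrac{d}{2}+2\}$, and this obstruction is genuine: if every $s_i$ equals the central value $\tfrac{d_{i-1}}{2}+1=\tfrac{n-2g}{2}+i$, no move of Lemma~\ref{lemme:lanneau} lets one escape, and one obtains the distinguished hyperelliptic sequence. The heart of the proof is to show that any tuple containing at least one $s_i$ off this central diagonal can be normalised to either $(1,\ldots,1,1)$ or $(1,\ldots,1,2)$, the residual $\{1,2\}$ distinction encoding a $\bmod 2$ invariant (ultimately the parity of the spin structure) preserved by the four moves. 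I plan to do this by a finite combinatorial descent: locate the rightmost non-central $s_i$, use rules~(2) and~(3) to decrement it at the cost of incrementing an earlier entry, commute past already-small entries using rule~(1), and iterate. The delicate point, and main obstacle, is to ensure that this descent never gets trapped inside the forbidden pair $\{\tfrac{d}{2}+1,\tfrac{d}{2}+2\}$; this will be handled by invoking rule~(3) (whose hypotheses are weaker and which produces a genuine decrease) whenever~(1) and~(2) alone fail, in close analogy with Lanneau's Lemma~6.13 in~\cite{La:cc}.
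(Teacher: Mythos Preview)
Your proposal is correct and follows essentially the same approach as the paper's own proof: iterate Proposition~\ref{exists:cyl} to write any component as $\mathcal{C}_0\oplus s_1\oplus\cdots\oplus s_g$, then use the four relations of Lemma~\ref{lemme:lanneau} to normalise the tuple to one of the three canonical forms (or to $(1,\ldots,1)$ in the odd case). The only cosmetic difference is the placement of rule~(4): the paper carries out the descent first (using, for $g=2$, the explicit cycle of moves $(1),(2),(3),(1)$ to pass from $(s_1,s_2)$ to $(s_1-1,s_2-1)$ once $s_1\le s_2\le s_1+1$) and invokes~(4) only at the end in the odd case to force equal parities, whereas you propose to apply~(4) upfront to bound each $s_i$ by the central value; both orderings lead to the same reduction.
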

\begin{proof}
Let $\mathcal{C}$ be a connected component of $\mathcal{H}(n,-p_1,\dots ,-p_s)$.
By proposition \ref{exists:cyl}, there exist integers $s_1,\dots ,s_g$, such that:
$$\mathcal{C}=\mathcal{C}_0\oplus s_1\oplus\dots \oplus s_g$$
and for each $i\in \{1,\dots ,g\}$, $1\leq s_i\leq n-2g-2+2i+1$, since at Step~$i$, the handle corresponding to $s_i$ is bubbled on a zero of degree $n-2g+2(i-1)$.

We assume for simplicity that $g=2$, and $(s_1,s_2)\neq ( \frac{n-2g}{2}+1, \frac{n-2g}{2}+2)$. 
Using operations $(1)$ and $(3)$ of Lemma~\ref{lemme:lanneau}, one can assume that $1\leq s_1\leq s_2\leq s_1+1$. Then, if $1\neq s_1$, using operations $(1)$, $(2)$, $(3)$ and $(1)$ (in this order), we have $\mathcal{C}_0\oplus s_1\oplus s_2=\mathcal{C}_0\oplus (s_1-1)\oplus (s_2-1)$. Repeating the same sequence of operations, we see that $\mathcal{C}$ is one of the following:
\begin{itemize}
\item $\mathcal{C}_0\oplus  (\frac{n-2g}{2}+1) \oplus  (\frac{n-2g}{2}+2)$
\item $\mathcal{C}_0\oplus 1\oplus 1$
\item $\mathcal{C}_0\oplus 1\oplus 2$
\end{itemize}

If $n$ is odd, then the first case doesn't appear. By operation $(4)$ of Lemma~\ref{lemme:lanneau},  we have $$\mathcal{C}_0\oplus s_1\oplus s_2=\mathcal{C}_0\oplus s_1\oplus ((n-2g+2)+2-s_2)$$
so we can assume that $s_1$ and $s_2$ are of the same parity. Then, using the previous argument, we have:
$$\mathcal{C}=\mathcal{C}_0\oplus 1\oplus 1$$
The case $g>2$ easily follows. 
\end{proof}

The above proposition uses purely local constructions in a neighborhood of a singularity. 
The next proposition explains why the existence of suitable poles (at infinity) will ``kill'' some components.

\begin{proposition} \label{min:strat:odd:poles:or:non:hyp}
Let $\mathcal{H}(n,-p_1,\dots ,-p_s)$ be a stratum of meromorphic differentials on surfaces of genus $g\geq 2$ with $n$ even and $s\geq 2$, and denote by $\mathcal{C}_0$ the unique component of $\mathcal{H}(n-2g,-p_1,\dots ,-p_s)$. The following holds:
\begin{enumerate}
\item If there is a odd degree pole and $\sum_i p_i>2$, then:
$$\mathcal{C}_0\oplus 1\oplus\dots \oplus 1=\mathcal{C}_0\oplus 1\oplus\dots \oplus 1 \oplus 2 $$
\item If $s> 2$ or $p_1\neq p_2$, then:
$$\mathcal{C}_0\oplus  \left(\frac{n-2g}{2}+1\right) \oplus  \dots \oplus \left(\frac{n-2g}{2}+g\right) = \mathcal{C}_0 \oplus 1\oplus\dots \oplus 1\oplus s$$
for some $s\in \{1,2\}$.
\end{enumerate}
\end{proposition}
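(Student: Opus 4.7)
The plan is to prove each identity by constructing an explicit continuous deformation connecting representatives of the two bubbling sequences. The two main tools are the bubbling a handle construction of Section~\ref{bubbling}, together with the identities of Lemma~\ref{lemme:lanneau}, and the fact that $\mathcal{C}_0$ is the unique connected component of a genus zero stratum, so that the base surface can be deformed freely.

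For item (2), I would realize a surface $S\in \mathcal{C}_0\oplus(\frac{n-2g}{2}+1)\oplus\dots\oplus(\frac{n-2g}{2}+g)$ by the infinite zippered rectangle construction of Section~\ref{inf:zipp:rect}, choosing the combinatorial data so that $S$ exhibits a hyperelliptic-type symmetry pairing the poles. Under the hypothesis $s>2$ or $p_1\neq p_2$, the pole data is not of hyperelliptic type in the sense of Proposition~\ref{list:hyp}, and so the pole arrangement cannot support an isometric involution globally. I would exploit this obstruction by deforming the base surface along a path in the connected stratum $\mathcal{H}(n-2g,-p_1,\dots,-p_s)$ that breaks the symmetric pole configuration. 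The $g$ bubbled handles are carried along continuously by the deformation; at the endpoint, applying the identities (1)--(4) of Lemma~\ref{lemme:lanneau} repeatedly normalises the tuple of bubbling parameters to the form $(1,\dots,1,s)$ for some $s\in \{1,2\}$, as in the proof of Proposition~\ref{min:strat:upper:bound}. This yields the equality of connected components.

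For item (1), I would work locally near a pole $P_j$ of odd degree $p_j$. Starting from a representative in $\mathcal{C}_0\oplus 1\oplus\dots\oplus 1$, I would arrange, using the freedom of $\mathcal{C}_0$ and the continuity of the bubbling construction, that the attaching saddle connection of the last bubbled handle lies very close to $P_j$. The local model $V_R$ and its cyclic covers described in Section~\ref{sec:flat:merom} provide a natural continuous rotation in a neighborhood of $P_j$; transporting the small handle once around $P_j$ along this rotation flips the bubbling parameter by one modulo two, turning a $1$ into a $2$ (up to the moves of Lemma~\ref{lemme:lanneau}). The parity of this flip is forced by the odd order of $P_j$, which governs the angular defect of the local holonomy. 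The assumption $\sum_i p_i>2$ guarantees enough room to perform this transport without collision: either $p_j\geq 3$ and the rotation lives in the cyclic cover of $V_R$, or $p_j=1$ and one uses another pole of the surface to close up the local argument.

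The main obstacle is item (1): one must check that the local rotation around an odd pole indeed changes the bubbling parameter by exactly one modulo two, and produces a continuous path inside the ambient stratum. The treatment of the simple-pole case deserves separate attention, since the neighborhood of a simple pole is an infinite cylinder rather than a branched cover; here one would slide the handle along the cylinder until it can interact with another singularity, and again conclude via the combinatorial moves of Lemma~\ref{lemme:lanneau}. For item (2), the main subtlety is the bookkeeping needed to uniformly handle both subcases $s>2$ and $s=2,\ p_1\neq p_2$, which both fail the hyperelliptic condition but through slightly different deformations of $\mathcal{C}_0$.
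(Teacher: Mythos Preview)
Your proposal misses the key idea that makes both parts work cleanly: the first bubble $\mathcal{C}_0\oplus s_1$ already lands in a \emph{genus one} stratum, and there you have a complete classification of connected components via the rotation number (Proposition~\ref{g1:cyl}). Concretely, $\mathcal{C}_0\oplus k_1=\mathcal{C}_0\oplus k_2$ in $\mathcal{H}(n-2g+2,-p_1,\dots,-p_s)$ if and only if $\gcd(k_1,p_1,\dots,p_s)=\gcd(k_2,p_1,\dots,p_s)$. For item~(1), an odd $p_j$ forces $\gcd(2,p_1,\dots,p_s)=1=\gcd(1,p_1,\dots,p_s)$, so $\mathcal{C}_0\oplus 1=\mathcal{C}_0\oplus 2$ already at the first step; then Lemma~\ref{lemme:lanneau} pushes the $2$ to the last slot. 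For item~(2), one checks that $\frac{n-2g}{2}+1=\frac{\sum_i p_i}{2}$, and the hypothesis $s>2$ or $p_1\neq p_2$ guarantees some $p_i<\frac{\sum_i p_i}{2}$, so $\gcd(\frac{n-2g}{2}+1,p_1,\dots,p_s)<\frac{n-2g}{2}+1$. Hence $\mathcal{C}_0\oplus(\frac{n-2g}{2}+1)=\mathcal{C}_0\oplus k$ for some $k<\frac{n-2g}{2}+1$, and now the normalisation of Proposition~\ref{min:strat:upper:bound} applies directly.

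Your proposed arguments, by contrast, are not complete. In item~(1), the ``local rotation around an odd pole flips the bubbling parameter by one'' is a heuristic with no mechanism supplied: you would need to exhibit an actual closed loop in the stratum and compute its effect on the handle parameter, and the simple-pole subcase you flag is genuinely problematic for this picture. In item~(2), deforming the base in $\mathcal{C}_0$ does nothing by itself---$\mathcal{C}_0$ is already connected, so any two base surfaces give the same $\mathcal{C}_0\oplus s_1\oplus\dots\oplus s_g$; ``breaking the symmetric pole configuration'' does not produce a change in the $s_i$. The whole point is that the \emph{genus one} stratum above $\mathcal{C}_0$ has several components, and the pole arithmetic tells you which bubbling parameters land in the same one.
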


\begin{proof}

Case (1).\\
Note that $s\geq 2$ implies that we necessarily have $\sum_{i} p_i\geq 2$.

From Proposition~\ref{g1:cyl}, $\mathcal{C}_0\oplus 2=\mathcal{C}
_0\oplus k$ if and only if $\gcd(k,p_1,\dots ,p_s)=\gcd(2,p_1,\dots ,p_s)$. So, if there is an odd degree pole, $\gcd(2,p_1,\dots ,p_s)=1=\gcd(1,p_1,\dots ,p_s)$, hence 
$$(\mathcal{C}_0\oplus 1)\oplus 1\dots \oplus 1=(\mathcal{C}_0\oplus 2)\oplus 1\dots \oplus 1= \mathcal{C}_0\oplus 1\dots \oplus 1 \oplus 2,$$
which concludes the proof. Note that $\mathcal{C}_0\oplus 2$ is well defined because $\sum_{i}p_i>2$.
\medskip

\noindent
Case (2).\\
As before, we use the classification in genus one.
 Since $n-2g-\sum_{i} p_i=-2$, we have $\frac{n-2g}{2}+1=\frac{\sum_i p_i}{2}$. If $s> 2$ or $p_1\neq p_2$, then there exists $i\in \{1,\dots ,s\}$ such that $\frac{n-2g}{2}+1> p_i$, so $\gcd(\frac{n-2g}{2}+1,p_1,\dots ,p_s)< \frac{n-2g}{2}+1$, hence there exists $k<\frac{n-2g}{2}+1$ such that $\mathcal{C}_0\oplus (\frac{n-2g}{2}+1)=\mathcal{C}_0\oplus k$. So we have 
$$\mathcal{C}_0\oplus (\frac{n-2g}{2}+1)\oplus (\frac{n-2g}{2}+2)\oplus \dots 
= \mathcal{C}_0 \oplus k \oplus (\frac{n-2g}{2}+2) \oplus \dots  $$
Then, as in the proof of  Proposition~\ref{min:strat:upper:bound}, 
$$\mathcal{C}_0 \oplus k \oplus (\frac{n-2g}{2}+2) \dots  \oplus (\frac{n-2g}{2}+g) =
\mathcal{C}_0 \oplus 1\oplus \dots \oplus 1\oplus s $$
for some $s\in \{1,2\}$.
\end{proof}

Putting together the last two propositions and the invariants, we have the following theorem.

\begin{theorem}\label{th:str:min}
Let $\mathcal{H}=\mathcal{H}(n,-p_1,\dots ,-p_s)$ be a minimal stratum of meromorphic differentials on genus $g\geq 2$ surfaces. We have:
\begin{enumerate}
\item If $n$ is even and $s=1$, then $\mathcal{H}$ has  two connected components if $g=2$ and $p_s=2$, three otherwise.
\item If $\mathcal{H}=\mathcal{H}(n,-p,-p)$, with $p$ even, then $\mathcal{H}$ has three connected components. 
\item If $\mathcal{H}=\mathcal{H}(n,-1,-1)$, then $\mathcal{H}$ has three connected components for $g>2$, two otherwise.
\item If $\mathcal{H}=\mathcal{H}(n,-p,-p)$, with $p\neq 1$ odd, then $\mathcal{H}$ has two connected components. 
\item If all poles are of even degree and we are not in the previous case, then $\mathcal{H}$ has two connected components.
\item In the remaining cases, $\mathcal{H}$ is connected.
\end{enumerate}

\end{theorem}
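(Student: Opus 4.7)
The plan is to combine the three tools already assembled. Proposition~\ref{min:strat:upper:bound} provides an upper bound on the number of components, Proposition~\ref{min:strat:odd:poles:or:non:hyp} gives identities that merge some of the candidate components, and the invariants from Section~\ref{invariants} (hyperellipticity and parity of spin structure) separate those candidates that are not merged. The case where $n$ is odd is immediate from Proposition~\ref{min:strat:upper:bound}, which already yields connectedness; this accounts for the $n$ odd part of Case~(6), so from now on I assume $n$ even.

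Write $\mathcal{C}_h, \mathcal{C}_1, \mathcal{C}_2$ for the three candidate components
$$\mathcal{C}_0\oplus\bigl(\tfrac{n-2g}{2}+1\bigr)\oplus\cdots\oplus\bigl(\tfrac{n-2g}{2}+g\bigr),\quad \mathcal{C}_0\oplus 1\oplus\cdots\oplus 1\oplus 1,\quad \mathcal{C}_0\oplus 1\oplus\cdots\oplus 1\oplus 2$$
listed in Proposition~\ref{min:strat:upper:bound}. A direct inspection of the parameters shows that $\mathcal{C}_h=\mathcal{C}_2$ precisely when $g=2$ and $n=2g$; this corresponds exactly to the exceptional sub-cases $(g,p_1)=(2,2)$ in item~(1) and $g=2$ in item~(3) of the theorem.

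I would then proceed case by case. Whenever the stratum is of hyperelliptic type (Cases~(1)--(4)), I identify $\mathcal{C}_h$ with the hyperelliptic component whose existence is guaranteed by Proposition~\ref{list:hyp}: the symmetric bubbling with parameters $(\tfrac{n-2g}{2}+i)$ is performed compatibly with a hyperelliptic involution already present on a surface in the genus-zero base stratum $\mathcal{C}_0$, so $\mathcal{C}_h$ consists of hyperelliptic surfaces and is therefore distinct from any non-hyperelliptic component. Whenever the stratum is of even type (Cases~(1), (2), (3), (5)), the parity of spin structure is well-defined, and a direct local computation at the last bubbled handle shows that $\mathcal{C}_1$ and $\mathcal{C}_2$ realise the two parities, hence are distinct. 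The merging identities of Proposition~\ref{min:strat:odd:poles:or:non:hyp} then collapse the remaining candidates as follows: in Case~(4), identity~(1) applies (odd pole with $\sum p_i=2p>2$), giving $\mathcal{C}_1=\mathcal{C}_2$ and leaving the hyperelliptic $\mathcal{C}_h$ together with one other component; in Case~(5), identity~(2) applies (since $s>2$ or $p_1\neq p_2$), merging $\mathcal{C}_h$ with one of the two spin candidates and leaving exactly two components separated by spin parity; in Case~(6) with $n$ even, one always has an odd pole together with $\sum p_i>2$ and with $s>2$ or $p_1\neq p_2$, so both identities apply and everything collapses to a single component. The two exceptional sub-cases in (1) and (3) are handled by observing that the identification $\mathcal{C}_h=\mathcal{C}_2$ reduces the candidate set to two a priori, and these two are then separated by hyperellipticity.

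The main obstacle is justifying that $\mathcal{C}_h$ is indeed hyperelliptic. This follows the template of Kontsevich--Zorich and Lanneau, but requires checking that an involution witnessing the hyperellipticity of a surface in $\mathcal{C}_0$ extends consistently through all $g$ bubbling steps when the handles are attached with the symmetric parameters; the key point is that each such parameter places the new handle symmetrically across the fixed-point set of the involution. A secondary, more routine task is the local computation that $\mathcal{C}_1$ and $\mathcal{C}_2$ have opposite spin parities: the two constructions differ only by replacing the last bubbling parameter~$1$ with a~$2$, which changes the index of one homology generator of the added handle by~$1$ and hence flips the sum computed in Section~\ref{invariants}.
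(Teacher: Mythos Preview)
Your proposal is correct and follows essentially the same route as the paper: reduce via Proposition~\ref{min:strat:upper:bound} to the three candidates $\mathcal{C}_h,\mathcal{C}_1,\mathcal{C}_2$, separate them by hyperellipticity and spin parity where those invariants are defined, and collapse them via Proposition~\ref{min:strat:odd:poles:or:non:hyp} in the remaining cases. Your explicit identification of the coincidence $\mathcal{C}_h=\mathcal{C}_2$ when $g=2$ and $n=2g$, your description of the spin-parity flip as a local index computation (this is exactly Lemma~11 of \cite{KoZo}, which the paper cites), and your outline of why $\mathcal{C}_h$ is hyperelliptic are all in line with the paper's ``it is easy to see'' steps; the only point you leave slightly implicit is that $\mathcal{C}_1$ (and $\mathcal{C}_2$ outside the exceptional case) are \emph{not} hyperelliptic, which is needed to conclude two components in Case~(4), but this is at the same level of detail as the paper itself.
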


\begin{proof}
From Proposition~\ref{min:strat:upper:bound}, when $n$ is odd, which is part of Case~(6), $\mathcal{H}$ is connected. So we can assume that $n$ is even. 
Let $\mathcal{C}$ be a connected component of $\mathcal{H}$. Let $\mathcal{C}_0$ be the (connected) genus 0 stratum $\mathcal{H}(n-2g,-p_1,\dots ,-p_s)$. 
From  Proposition~\ref{min:strat:upper:bound}, we have one of the three following possibilities.
\begin{enumerate}
\item[a)] $\mathcal{C}=\mathcal{C}_0\oplus (\frac{n-2g}{2}+1) \oplus (\frac{n-2g}{2}+2)\oplus\dots \oplus \frac{n}{2} $
\item[b)] $\mathcal{C}=\mathcal{C}_0\oplus 1 \oplus \dots \oplus 1 \oplus 1 $
\item[c)] $\mathcal{C}=\mathcal{C}_0\oplus 1 \oplus \dots \oplus 1\oplus 2$
\end{enumerate}

When $\mathcal{H}=\mathcal{H}(n,-p)$ or $\mathcal{H}=\mathcal{H}(n,-p,-p)$, it is easy to see that case $a)$ corresponds to a hyperelliptic connected component, while case $b)$ does not, and neither $c)$ (except for the case $n-2g=0$ and $g=2$, where $a)$ and $c)$ are the same). 

When all degree of zeroes (and poles) are even, then Lemma~11 in \cite{KoZo} shows that cases b) and c) correspond to different spin structures, so are a different connected components. This is also true for $\mathcal{H}(n,-1,-1)$ by Section~\ref{spin:poles:simples}.

The arguments of the two previous paragraphs proves the result for Cases (1), (2) and (3). Remark that $(n-2g)-\sum_i p_i=-2$.

For Case $(4)$, Proposition~\ref{min:strat:odd:poles:or:non:hyp} shows that there are at most two connected components. Since $n-2g=2p-2>0$, Case~$a)$ corresponds to a hyperelliptic component while $b)$and $c)$ do not correspond to a hyperelliptic component. So there are at least two components. Since there are odd degree poles, $b)$ and $c)$ correspond to the same component by Proposition~\ref{min:strat:odd:poles:or:non:hyp}. So there are two components.

For Case $(5)$, Proposition~\ref{min:strat:odd:poles:or:non:hyp} shows that $a)$ is in the same connected component as $b)$ or $c)$, while Lemma~11 in \cite{KoZo} shows that $b)$ and $c)$ have different spin structures.

For Case $(6)$, with $n$ is even: this corresponds to having at least one odd pole, and either at least three poles or two poles of different degree. Then a direct application of Proposition~\ref{min:strat:odd:poles:or:non:hyp} shows that $a)$, $b)$ and $c)$ are the same connected component.

This concludes the proof.


\end{proof}

\section{Higher genus case: nonminimal strata} \label{sec:nonminimal}
The remaining part of the paper uses similar arguments as in Sections~5.2--5.4 in \cite{KoZo}. We quickly recall the three main steps.
\begin{itemize}
\item Each stratum is adjacent to a minimal stratum, and we can bound the number of connected components of a stratum by the number of connected components of the corresponding minimal one.
\item We construct paths in suitable strata with two conical singularities that join the different connected components of the minimal stratum.
\item We deduce from the previous arguments upper bounds on the number of connected component of a stratum, lower bounds are given by the topological invariants.
\end{itemize}

The following proposition is analogous to Corollary~4 in \cite{KoZo}. It is proven there by constructing surfaces with a one cylinder decomposition. Such surfaces never exist in our case, we use the infinite zippered rectangle construction instead.

\begin{proposition}\label{adj:minimal}
Any connected component of a stratum of meromorphic differentials is adjacent to the minimal stratum obtained by collapsing together all the zeroes.
\end{proposition}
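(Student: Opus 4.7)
My plan is to apply the infinite zippered rectangle construction. Let $\mathcal{C}$ be a connected component of $\mathcal{H}=\mathcal{H}(n_1,\dots,n_r,-p_1,\dots,-p_s)$ and pick $S\in\mathcal{C}$. After rotating $S$ (an operation which preserves $\mathcal{C}$), I may assume $S$ has no vertical saddle connection, so by Proposition~\ref{no:vertical} it is obtained by the infinite zippered rectangle construction with continuous parameters $\zeta_1,\dots,\zeta_n$, where by Lemma~\ref{lem:dim} we have $n=2g+r+s-2$. Each $\zeta_i$ corresponds to a saddle connection $\gamma_i$, and the $r$ conical singularities together with these $\gamma_i$ form a graph $G$ embedded in $\overline{X}$ whose complement is the disjoint union of the basic domains $D^\pm$ and $C^\pm$, each becoming a topological disk once the corresponding pole is filled in. In particular $G$ is a connected graph on $r$ vertices.

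Choose any spanning tree $T\subset G$, consisting of $r-1$ edges, say $\gamma_{i_1},\dots,\gamma_{i_{r-1}}$. By Remark~\ref{rq:param} the parameters $(\zeta_i)$ are local coordinates on the stratum, so the family $S_t$ obtained by replacing each $\zeta_{i_k}$ by $(1-t)\zeta_{i_k}$ and leaving the other parameters unchanged defines a continuous path in $\mathcal{H}$ for $t\in[0,1)$, entirely contained in $\mathcal{C}$. As $t\to 1$, the saddle connections in $T$ contract to length zero; since $T$ is a tree on all $r$ zeros, this collapse merges them coherently into a single conical point. Locally, each such collapse is the inverse of the breaking-up-a-singularity surgery of Section~\ref{bzero:loc}, so iterating along the edges of $T$ fuses the $r$ singularities of degrees $n_1,\dots,n_r$ into one of degree $n_1+\cdots+n_r$. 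The limit surface $S_1$ is then described by the infinite zippered rectangle construction with the remaining $n-(r-1)=2g+s-1$ parameters, which is exactly the number prescribed by Lemma~\ref{lem:dim} for the minimal stratum $\mathcal{H}(n_1+\cdots+n_r,-p_1,\dots,-p_s)$. Hence $\mathcal{C}$ is adjacent to this minimal stratum.

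The main point to verify is that the shrinking procedure does not create unwanted degenerations and really lands in the claimed minimal stratum. Since all parameters outside $T$ are fixed and retain positive real part throughout the deformation, the combinatorial data of the infinite zippered rectangle construction is preserved and no other basic domain collapses; the only degeneration is the concurrent shrinking of the $\gamma_{i_k}$, which, because $T$ is an embedded tree in $\overline{X}$, acts as a purely local surgery around its vertices and never pinches a nontrivial cycle. The combinatorial data of the construction with the removed parameters therefore yields a valid infinite zippered rectangle datum for the minimal stratum, completing the argument.
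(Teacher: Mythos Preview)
Your approach differs from the paper's. The paper argues by induction on the number of zeroes: by connectedness of $S$ some $\zeta_i$ joins two \emph{distinct} conical singularities, and shrinking just that one $\zeta_i$ to zero produces a valid surface with one fewer zero (the only delicate case, a block $C^\pm(\zeta_i)$ consisting of that single segment, is excluded because its two boundary vertices would then coincide). One then repeats on the smaller stratum. Your global version---pick a spanning tree of the incidence graph and contract all its edges simultaneously---is a legitimate alternative, and it has the pleasant feature of producing the limiting minimal-stratum datum in one stroke rather than re-invoking Proposition~\ref{no:vertical} at each inductive step.

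There is, however, a gap in your last paragraph. You justify nondegeneracy of the limit by saying that collapsing an embedded tree ``never pinches a nontrivial cycle'' in $\overline{X}$. The degeneration you actually need to exclude is that some $C^\pm$ block has \emph{all} of its $\zeta$-parameters lying in $T$; if so, that half-cylinder around a simple pole would collapse. But the core curve of such a half-cylinder is \emph{trivial} in $H_1(\overline{X})$ (it bounds the disc containing the pole), so your topological criterion does not rule it out. The correct argument is purely graph-theoretic: the $\zeta$-segments on the boundary of a $C^\pm$ block are distinct edges of $G$ whose concatenation begins and ends at the same singularity, hence they form a closed trail of positive length in $G$, and no tree contains such a trail. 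With that one sentence added, your limit datum is a valid infinite zippered rectangle datum with $n-(r-1)=2g+s-1$ parameters, and your argument is complete.
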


\begin{proof}
Let $S$ be in a stratum $\mathcal{H}$ of meromorphic differentials. We prove the result by induction on the number of conical singularities of $S$. 
We can assume that $S$ is obained by the previous construction. By connectivity of $S$, there is a $D^\pm$ component or a $C^\pm$ component that contains two different conical singularities  on its boundary, hence, there is a parameter $\zeta_i$ whose corresponding segment on that component joins two different conical singularities.  
The segment is on the boundary of two components. Assume for instance, that it is a $D^+$ and a $C^-$ component. Now we just need to check that the surface obtained by shrinking $\zeta_i$ to zero is nondegenerate. Hence it will correspond to an element in a stratum with one  conical singularity less. 
The set $D'^+$ obtained by shrinking $\zeta_i$ to zero from $D^+$ is still a domain as defined in Section~\ref{basic:domains}. The set $C'^-$ obtained by shrinking $\zeta_i$ to zero from $C^-$ is also a domain as defined in Section~\ref{basic:domains} except if we have $C^-=C^-(\zeta_i)$. But in this case, since the two vertical lines of $C^-$ are identified together, the two endpoints of the segment defined by $\zeta_i$ are necessarily the same singularity, contradicting the hypothesis.

So, in any case, we obtain a surface $S'$ with fewer conical singularities. 
\end{proof}

The following proposition is analogous to Corollary~2 in  \cite{KoZo}, and is the first  step of the proof described in the beginning of this section. The proof of Kontsevich and Zorich uses a deformation theory argument. We propose a proof that uses only flat geometry.

\begin{proposition} \label{prop:upper:bound}
The number of connected component of a stratum is smaller than or equal to the number of connected component of the corresponding minimal stratum.
\end{proposition}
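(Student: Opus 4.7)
The strategy is to construct a surjective map $\Phi: \pi_0(\mathcal{H}_{min}) \twoheadrightarrow \pi_0(\mathcal{H})$, where $\mathcal{H}_{min}$ denotes the minimal stratum obtained from $\mathcal{H}$ by collapsing together all the zeroes. Given a component $\mathcal{C}_0$ of $\mathcal{H}_{min}$, I would pick any surface $S_0\in \mathcal{C}_0$ and iteratively apply the local breaking up surgery of Section~\ref{bzero:loc} to split the unique singularity into zeroes of degrees $n_1,\dots, n_r$. Choosing the breaking parameters small enough yields a surface $S\in \mathcal{H}$, and I define $\Phi(\mathcal{C}_0)$ to be the connected component of $\mathcal{H}$ containing $S$.

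To show that $\Phi$ is well-defined I need three facts. First, for a fixed $S_0$ and a fixed combinatorial splitting pattern, the breaking parameters live in a connected set, so different parameter values always produce surfaces lying in the same component of $\mathcal{H}$. Second, as $S_0$ is deformed continuously inside $\mathcal{C}_0$, the broken surface $S$ also deforms continuously inside $\mathcal{H}$, since the surgery is purely local and does not affect the bulk of the surface; hence the component of $S$ only depends on the component of $S_0$. Third, different orderings of the successive breakings, and different combinatorial patterns at each step, must produce surfaces in the same component of $\mathcal{H}$.

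Surjectivity of $\Phi$ is then immediate from Proposition~\ref{adj:minimal}: any component $\mathcal{C}$ of $\mathcal{H}$ is adjacent to $\mathcal{H}_{min}$, so contains a surface obtained by breaking up the singularity of some $S_0\in \mathcal{H}_{min}$, and if $\mathcal{C}_0$ is the component of $S_0$ then $\Phi(\mathcal{C}_0)=\mathcal{C}$. Once surjectivity and well-definedness are established we immediately conclude that $|\pi_0(\mathcal{H})|\leq |\pi_0(\mathcal{H}_{min})|$.

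The main obstacle is the third well-definedness point. I would address it by exploiting the locality of the surgery: once all the saddle connections produced by the successive breakings are sufficiently short, they all lie inside a single flat neighborhood of the original singularity, so one can move them by continuous deformations inside $\mathcal{H}$ (shrinking one while growing another, or rotating one past another within the local cone) to pass between different combinatorial configurations. In terms of the infinite zippered rectangle construction this amounts to reshuffling the small parameters $\zeta_i$ surrounding one vertex; the passages between chambers one meets along the way are precisely of the type encoded by the local identities of Lemma~\ref{lemme:lanneau}, applied in a neighborhood of the vertex. Once this is checked, the three points combine to give the required surjection and hence the proposition.
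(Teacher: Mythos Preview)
Your overall strategy—building a surjection $\Phi:\pi_0(\mathcal{H}_{min})\to\pi_0(\mathcal{H})$ via the breaking-up surgery and invoking Proposition~\ref{adj:minimal} for surjectivity—is essentially the paper's argument, and your points~(a) and~(b) are fine. The gap is in your handling of point~(c).

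Your appeal to Lemma~\ref{lemme:lanneau} is misplaced: that lemma concerns the \emph{bubbling a handle} operation $\mathcal{C}\oplus s$, which increases the genus by one, not the breaking-up surgery of Section~\ref{bzero:loc}, which leaves the genus unchanged. The ``local identities'' encoded there are relations among different ways of attaching a cylinder to a single zero, not relations among different orderings or combinatorial patterns for splitting a zero into several. So the moves you would actually need in order to pass between two configurations of short saddle connections inside the cone are not those of Lemma~\ref{lemme:lanneau}, and you have not supplied an alternative. The vague ``reshuffling small parameters'' does not by itself establish that any two such configurations are connected within $\mathcal{H}$.

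The paper resolves point~(c) by a different and cleaner device. Once all the conical singularities of the broken surface sit inside a small metric disk $D$ whose boundary is a metric circle, one caps off $\partial D$ with a neighborhood of a single pole of order $2+\sum_i n_i$, obtaining a genus~zero translation surface in $\mathcal{H}(n_1,\dots,n_r,-2-\sum_i n_i)$. Any other configuration $D'$ of the singularities (same degrees, isometric boundary circle) yields another surface in the \emph{same} genus~zero stratum. Since every genus~zero stratum is connected, there is a path from one to the other; transplanting this path of disks back into the original surface produces a path in $\mathcal{H}$ joining the two broken surfaces. This replaces your proposed combinatorial analysis by a single global fact and handles all orderings and splitting patterns at once.
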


\begin{proof}
From the previous proposition, any connected component of a stratum $\mathcal{H}=\mathcal{H}(k_1,\dots ,k_r,-p_1,\dots, -p_s)$ is adjacent to a minimal stratum $\mathcal{H}^{min}= \mathcal{H}(k_1+,\dots +k_r,-p_1,\dots ,-p_s)$ by collapsing zeroes. It is enough to show that if $(S_n),\ (S'_n)$ are two sequences in $\mathcal{H}$ that converge to a surface $S\in \mathcal{H}^{min}$, then $S_n$ and $S_n'$ are in the same connected component of $\mathcal{H}$ for $n$ large enough.


By definition of the topology on the moduli space of meromorphic differentials , for $n$ large enough, 
the conical singularities of $S_n$ (resp. $S_n'$) are all in a small disk $D_n$ (resp. $D_n'$)  which is embedded in the surface $S_n$ (resp. $S_n'$), and whose boundary is a covering of a metric circle.

Note that $D_n$ and $D_n'$ can be chosen arbitrarily small if $n$ is large enough, and we can assume that they have isometric boundaries.  Replacing $D_n$ by a disk with a single singularity, one obtains a translation surface $\widetilde{S}_n$ which is very near to $S$, hence in the same connected component, and similarly for $S_n'$.

Now we want to deform $D_n$ to obtain $D_n'$. It is obtained in the following way: $D_n$ can be seen as a subset of a genus zero translation surface $S_1$ in the stratum $\mathcal{H}(k_1,\dots ,k_r,-2-\sum_{i=1}^r k_i)$: we just ``glue'' a neighborhood of a pole to the boundary of the disk $D_n$. We proceed similarly with the disk $D_n'$ and obtain a translation surface $S_2$ in the same stratum as $S_1$. This stratum is connected since the genus is zero. Hence we deduce a continuous transformation that deform $D_n$ to $D_n'$.

From the last two paragraphs, we easily deduce a continuous path from $S_n$ to $S_n'$, which proves the proposition.
\end{proof}

The following proposition is the second step of the proof. It is the analogous of Proposition~5 and Proposition~6 in \cite{KoZo}. Our proof is also valid for the Abelian case, and gives an interesting alternative proof.

\begin{proposition}\label{join:cc}
\begin{enumerate}
\item Let $\mathcal{H}=\mathcal{H}(n,-p_1,\dots ,-p_s)$ be a genus $g\geq 2$  minimal stratum whose poles are all even or the pair $(-1,-1)$. 
For any $n_1,n_2$ odd such that $n_1+n_2=n$, there is a path $\gamma(t)\in \overline{\mathcal{H}(n_1,n_2,-p_1,\dots ,-p_s)}$ such that $\gamma(0),\gamma(1)\in \mathcal{H}$ and have different parities of spin structures.
\item Let $\mathcal{H}=\mathcal{H}(n,-p_1,\dots ,-p_s)$ be a genus $g\geq 2$  minimal stratum that contains a hyperelliptic connected component. For any $n_1\neq n_2$  such that $n_1+n_2=n$, there is a path $\gamma(t)\in \overline{\mathcal{H}(n_1,n_2,-p_1,\dots ,-p_s)}$ such that $\gamma(0)$ is in a hyperelliptic component of $\mathcal{H}$ and $\gamma(1)$ is in a nonhyperelliptic component of $\mathcal{H}$.
\end{enumerate}
\end{proposition}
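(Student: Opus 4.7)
The plan is to adapt the approach of Propositions~5 and~6 of \cite{KoZo} to the meromorphic setting. In both parts, the path $\gamma(t)$ will split into three stages: first, break the zero of degree $n$ of a surface in $\mathcal{H}$ into two zeros of degrees $n_1$ and $n_2$ via the surgery of Section~\ref{bzero:loc}, producing a nearby surface in $\mathcal{H}(n_1, n_2, -p_1, \dots, -p_s)$ with a short saddle connection $\delta$ between them; second, deform continuously inside $\mathcal{H}(n_1, n_2, -p_1, \dots, -p_s)$; third, collapse a short saddle connection to re-enter $\mathcal{H}$. The nontrivial content is showing the middle stage can be arranged so the two endpoints lie in different connected components of $\mathcal{H}$.

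For part (1), I would start from $S_0 \in \mathcal{H}$ obtained by bubbling a handle, as provided by Proposition~\ref{exists:cyl}. Such $S_0$ contains a cylinder $C$ whose core curve $\alpha$ has a prescribed index depending on the bubbling parameter. Break the singularity of $S_0$ into zeros $P_1, P_2$ of odd degrees $n_1, n_2$ so that $\delta$ lies outside $C$. In $\mathcal{H}(n_1, n_2, -p_1, \dots, -p_s)$, perform the continuous isotopy of the surface that drags $P_2$ (together with one end of $\delta$) once around $\alpha$; this is well-defined since $\alpha$ avoids the singularities. Collapse $\delta$ at the end to obtain $S_1 \in \mathcal{H}$. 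To compute the spin change, I would track how the indices of a symplectic basis of $H_1(\overline{X}, \mathbb{Z})$ change under the dragging: each basis curve $\sigma$ that intersects $\alpha$ transversely gets an index shift of $\pm n_2$, because the singularity of degree $n_2$ crosses $\sigma$. Substituting into the formula $\sum_i (\mathrm{ind}(\alpha_i) + 1)(\mathrm{ind}(\beta_i) + 1) \pmod 2$ and using that $n_2$ is odd, together with the parity of $\mathrm{ind}(\alpha)$ provided by the bubbling construction, the spin parities of $S_0$ and $S_1$ differ by $1 \pmod 2$.

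For part (2), start from $S_0$ in the hyperelliptic component of $\mathcal{H}$, which exists by Proposition~\ref{list:hyp}, with hyperelliptic involution $\tau_0$. The involution $\tau_0$ fixes the unique zero $Z$ and acts near $Z$ as $z \mapsto -z$ in suitable local coordinates. Break $Z$ asymmetrically into $P_1, P_2$ of degrees $n_1 \neq n_2$: the resulting surface $S'_0 \in \mathcal{H}(n_1, n_2, -p_1, \dots, -p_s)$ cannot be hyperelliptic, for any isometric involution with sphere quotient would have to preserve the multiset of singularities, hence fix both $P_1$ and $P_2$ (the swap is impossible since $n_1 \neq n_2$); a local analysis at each $P_i$ shows such a fixing involution would have to act as $z \mapsto -z$ in a neighborhood, forcing compatibility with $\tau_0$ away from the breaking region and therefore $n_1 = n_2$, a contradiction. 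By Proposition~\ref{list:hyp}, the stratum $\mathcal{H}(n_1, n_2, -p_1, \dots, -p_s)$ itself contains no hyperelliptic component, so the locus of surfaces whose collapse lands in the hyperelliptic component of $\mathcal{H}$ is a proper closed subset. A generic small deformation of $S'_0$ therefore collapses to some $S_1$ in a non-hyperelliptic component of $\mathcal{H}$.

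The main obstacles lie in the middle stage of each part. In part (1), the spin computation requires carefully tracking each intersection between the dragged saddle connection and the cycles of the symplectic basis, and invoking the analogue of \cite{KoZo} Lemma~11 adapted to our meromorphic setting. In part (2), the delicate point is verifying that a generic deformation of $S'_0$ indeed collapses to a non-hyperelliptic surface; this rests on the fact (implicit from Proposition~\ref{list:hyp}) that the hyperelliptic locus in the bigger stratum is of strictly positive codimension, together with a continuity argument for the collapse map near the boundary.
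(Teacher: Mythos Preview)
Both parts of your proposal contain genuine gaps, and the paper in fact proceeds by an entirely different mechanism.

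\textbf{Part (2).} The step ``a generic small deformation of $S_0'$ therefore collapses to some $S_1$ in a nonhyperelliptic component'' is false. The collapse map is continuous, and by construction $S_0'$ collapses to the hyperelliptic surface $S_0$; hence \emph{every} surface close to $S_0'$ collapses to a surface close to $S_0$, which lies in the (open) hyperelliptic component of $\mathcal{H}$. That $S_0'$ itself fails to be hyperelliptic, or that $\mathcal{H}(n_1,n_2,\dots)$ has no hyperelliptic component, is irrelevant: what you need is a path in $\mathcal{H}(n_1,n_2,\dots)$ reaching a neighborhood of a \emph{different} boundary component, and you give no mechanism for producing one.

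\textbf{Part (1).} Dragging $P_2$ once around the closed curve $\alpha$ by a rel (isoperiodic) deformation is a \emph{closed loop} in the moduli space $\mathcal{H}(n_1,n_2,\dots)$: the absolute periods are fixed and the relative period $\int_\delta\omega$ is shifted by the absolute period $\int_\alpha\omega$, so the resulting flat surface is isometric to $S_0'$. Collapsing the short saddle connection then returns $S_0$ with its original spin. Your index bookkeeping only records that the \emph{tracked representatives} of the basis have changed homotopy class relative to the zeros; since $n_1,n_2$ are odd the spin formula is not even an invariant on $\mathcal{H}(n_1,n_2,\dots)$, and on the collapsed surface in $\mathcal{H}$, where it \emph{is} invariant, the answer cannot depend on which representatives you tracked.

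\textbf{What the paper does instead.} The paper reduces both statements to the genus-one classification of Section~\ref{genus1}. One writes the two target components of $\mathcal{H}$ as $\mathcal{C}\oplus 1$ and $\mathcal{C}\oplus 2$ (resp.\ $\mathcal{C}\oplus\tfrac{n}{2}$ and some $\mathcal{C}\oplus k$) for a fixed genus-$(g{-}1)$ component $\mathcal{C}$, and realizes surfaces in them by gluing a fixed $S_{g-1}\in\mathcal{C}$ to a genus-one surface $S_1\in\mathcal{H}(n,-n)$ along a metric circle. This grafting is compatible with breaking up the zero, so it suffices to find a path in $\overline{\mathcal{H}(n_1,n_2,-n)}$ joining the relevant components of $\mathcal{H}(n,-n)$. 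That is immediate from the rotation-number invariant: breaking the zero of a surface in $\mathcal{H}(n{-}2,-n)\oplus 2$ into odd parts drops the rotation number from $2$ to $\gcd(2,n_1,n_2)=1$, and breaking the zero of a surface in $\mathcal{H}(n{-}2,-n)\oplus\tfrac{n}{2}$ into unequal parts drops it from $\tfrac{n}{2}$ to a proper divisor. The required paths in genus one then transport, via the gluing, to the desired paths in $\overline{\mathcal{H}(n_1,n_2,-p_1,\dots,-p_s)}$.
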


\begin{proof}
\emph{Case (1)}\\
Let $\mathcal{C}_0=\mathcal{H}(n-2g,-p_1,\dots ,-p_s)$. The connected components of $\mathcal{H}$ given by $\mathcal{C}_0\oplus 1\dots \oplus 1 \oplus 1$ and $ \mathcal{C}_0\oplus 1\dots \oplus 1\oplus 2$ have different parities of spin structures. We can rewrite these components as
$ \mathcal{C}\oplus 1$ and $\mathcal{C}\oplus 2$, 
where $\mathcal{C}= \mathcal{C}_0\oplus 1\dots \oplus 1$.

Fix $S_{g-1}\in \mathcal{C}$. 
For a surface $S_1\in \mathcal{H}(n,-n)$, one can get a surface $S$ in $\mathcal{H}(n,-p_1,\dots ,-p_s)$ by the following surgery: 
\begin{itemize}
\item Cut $S_{g-1}$ along a small metric circle that turns around the singularity of degree $n-2$, and remove the disk bounded by this circle
\item Cut $S_1$ along a large circle that turns around the pole of order $n$, and rescale $S_1$ such that this circle is isometric to the previous one. Remove the neighborhood of the pole of order $n$ bounded by this circle.
\item Glue the two remaning surfaces along these circle, to obtain a surface $S\in \mathcal{H}(n,-p_1,\dots ,-p_s)$.
\end{itemize}
All choices in previous construction lead to the same connected component of $\mathcal{H}(n,-p_1,\dots ,-p_s)$, once $S_{g-1}, S_1$ are fixed. 
Similarly, we can do the same starting from a surface in $S_1\in\mathcal{H}(n_1,n_2,-n)$ and get a surface in $\mathcal{H}(n_1,n_2,-p_1,\dots ,-p_s)$. 

Now we start from a surface $S_{1,1} \in \mathcal{H}(n,-n)$ obtained by bubbling a handle with angle $2\pi$, \emph{i.e.} $S_{1,1}\in \mathcal{H}(n-2,-n)\oplus 1$. The rotation number of this surface is $\gcd(1,n)=1$. Breaking up the singularity into two singularities of order $n_1,n_2$,  the rotation number is still $1$.
Similarly, start from $S_{1,2}\in \mathcal{H}(n-2,-n)\oplus 2$. Its rotation number is $\gcd(2,n)=2$. Breaking up the singularity into two singularities of order $n_1,n_2$,  the rotation number becomes $\gcd(2,n_1,n_2)=1$ since $n_1,n_2$ are odd. Hence there is a path in $\overline{\mathcal{H}(n_1,n_2,-n)}$ that joins $S_{1,1}\in\mathcal{H}(n-2,-n)\oplus 1$  to  
$S_{1,2}\in \mathcal{H}(n-2,-n)\oplus 2$. From this path, we deduce a path in $\overline{\mathcal{H}(n_1,n_2,-p_1,\dots ,-p_s)}$ that joins $\mathcal{C}\oplus 1$ to $\mathcal{C}\oplus 2$. So Part~$(1)$ of the proposition is proven.
\medskip

\noindent
Case (2)\\
The proof is similar as the previous one: the hyperelliptic component of $\mathcal{H}(n,-p_1,\dots ,-p_s)$ is of the kind $\mathcal{C}\oplus \frac{n}{2}$, for some component $\mathcal{C}$.  Any component of the kind $\mathcal{C}\oplus k$, with $k\neq \frac{n}{2}$ is nonhyperelliptic. As before, we use the case of genus one strata.
A surface in $\mathcal{H}(n-2,-n)\oplus \frac{n}{2}$ is of rotation number $\gcd(\frac{n}{2},n)=\frac{n}{2}$. Breaking up the singularity of degree $n$ into two singularities of degree $n_1,n_2$, one obtains surface in $\mathcal{H}(n_1,n_2,-n)$ of rotation number $\gcd(\frac{n}{2},n_1,n_2)$. Since $n_1+n_2=n$ and $n_1\neq n_2$, this rotation number is not $\frac{n}{2}$, but some integer $k\in \{1,\dots ,\frac{n}{2}-1\}$. Hence there is a path in $\overline{\mathcal{H}(n_1,n_2,-n)}$ that joins  $\mathcal{H}(n-2,-n)\oplus \frac{n}{2}$ to  $\mathcal{H}(n-2,-n)\oplus k$. From this, we deduce the required path in $\overline{\mathcal{H}(n_1,n_2,-p_1,\dots ,-p_s)}$.
\end{proof}


Now we have all the intermediary results to prove  Theorem~\ref{MT2}.

\begin{proof}[Proof of Theorem~\ref{MT2}]
Let $\mathcal{H}=\mathcal{H}(n_1,\dots ,n_r,-p_1,\dots ,-p_s)$ be a stratum of genus $g\geq 2$ surfaces. Denote by $\mathcal{H}_{min}$ the minimal stratum obtained by collapsing all zeroes. Recall that by Proposition~\ref{prop:upper:bound}, the number of connected components of $\mathcal{H}$ is smaller than, or equal to the number of connected components of $\mathcal{H}_{min}$.

If $\sum_i p_i$ is odd, then the minimal stratum is connected and therefore the stratum is connected. So we can assume that $\sum_i p_i$ is even.

Assume that $\sum p_i>2$ or $g>2$.  From Theorem~\ref{th:str:min}, $\mathcal{H}_{min}$, hence $\mathcal{H}$ has at most three components.

We fix some vocabulary: we say that the set of degree of zeroes (resp. poles) is of \emph{hyperelliptic type} if this set is $\{n,n\}$ or $\{2n\}$ (resp. $\{-p,-p\}$ or $\{-2p\}$), \emph{i.e.} it is the set of degree of zeroes or poles of a hyperelliptic component. Note that the set of degree of poles are of hyperelliptic type if and only if the corresponding minimal stratum contains a hyperelliptic connected component. We will also say that the set of degree of poles is of \emph{even type} if the degrees of the zeroes are all even or if they are $\{-1,-1\}$. This means that the underlying minimal stratum has two nonhyperelliptic components  distinghished by the parity of the spin structure.

\begin{itemize}
\item If the stratum  is $\mathcal{H}(n,n,-2p)$ or $\mathcal{H}(n,n,-p,-p)$. There is a hyperelliptic connected component. The corresponding minimal stratum $\mathcal{H}(2n,*)$ has one hyperelliptic component and at least one nonhypereliptic component. It is easy to see that breaking up the singularity of degree $2n$ into two singularities of degree $n$, from a nonhyperelliptic translation surface gives a surface in a nonhyperelliptic connected component. So, the stratum $\mathcal{H}(n,n,*)$ has one hyperelliptic connected component and at least one nonhyperelliptic connected component.
\item If the set of degrees of poles and zeroes is of even type, we know from Theorem~\ref{th:str:min} that the minimal stratum has  two nonhyperelliptic components (and possibly one hyperelliptic). Breaking up the singularity into even degree singularities preserves the spin structure, which therefore gives at least two nonhyperelliptic components in the stratum.
\end{itemize}
From the above description, we obtain  lower bounds on the number of connected components. In particular, we see that if the degrees of zeroes and poles are both of hyperelliptic and even type, $\mathcal{H}$ as at least, so exactly, three connected components. Also, if the set of degrees of zeroes and poles is of hyperelliptic or even type, $\mathcal{H}$ has at least two connected components.

Now we give upper bounds.
\begin{enumerate}
\item Assume that the poles are of hyperelliptic and even type, \emph{i.e.} the minimal stratum has three connected components. Denote respectively by $\mathcal{C}^{hyp}, \mathcal{C}^{odd}$ and $\mathcal{C}^{even}$ the connected components of $\mathcal{H}$ that are adjacent respectively to the three connected components of $\mathcal{H}_{min}$, $\mathcal{H}_{min}^{hyp},\mathcal{H}_{min}^{odd}$ and $\mathcal{H}_{min}^{even}$. For any $j\in \{1,\dots ,r\}$, the stratum $\mathcal{H}(n_j,\sum_{i\neq j} n_i,-p_1,\dots ,-p_s)$ is adjacent to $\mathcal{H}_{min}$. 
\begin{itemize}
\item 
If the zeroes are not of hyperelliptic type, we can choose, $n_i$ so that $n_i\neq \sum_{i\neq j} n_i$, and by Proposition~\ref{join:cc}
  there is a path in $\mathcal{H}(n_j,\sum_{i\neq j} n_i,-p_1,\dots ,-p_s)$ joining the hyperelliptic component of $\mathcal{H}_{min}$ to a nonhyperelliptic connected component. Breaking up the singularity of order $\sum_{i\neq j}^r n_i$ along this path into singularities of order $(n_i)_{i\neq j}$, we obtain a path in $\mathcal{H}$ that joins a neighborhood of $\mathcal{H}_{min}^{hyp}$ to a neighborhood of a nonhyperelliptic component of $\mathcal{H}_{min}$. Hence, we necessarily have $\mathcal{C}^{hyp}=\mathcal{C}^{odd}$ or $\mathcal{C}^{hyp}=\mathcal{C}^{even}$.
 \item 
 If the zeroes are not even, we conclude similarly that $\mathcal{C}^{odd}=\mathcal{C}^{even}$
 \item Note that if the zeroes are neither of hyperelliptic type nor of even type, then $\mathcal{C}^{even}=\mathcal{C}^{odd}=\mathcal{C}^{hyp}$, so there is only one component for $\mathcal{H}$.
\end{itemize}

\item Assume that the poles are of hyperelliptic type but not of even type. The minimal stratum has two connected components, so there are at most two connected components for $\mathcal{H}$. 
If the zeroes are of hyperelliptic type, we have already seen that there are two components. 

Assume the zeroes are not of hyperelliptic type. 
Denote respectively by $\mathcal{C}^{hyp}, \mathcal{C}^{nonhyp}$ the connected components of $\mathcal{H}$ that are adjacent respectively to the hyperelliptic and the nonhyperelliptic component of $\mathcal{H}_{min}$. 
By the same argument as in $(1)$, using Proposition~\ref{join:cc} we have $\mathcal{C}^{hyp}=\mathcal{C}^{nonhyp}$, so $\mathcal{H}$ is connected.

\item Assume that the poles are of even type but not of hyperelliptic type. The minimal stratum has two connected components distinguished by the parity of the spin structure. So there are at most two components for $\mathcal{H}$. If the zeroes are of even type, there are exactly two connected component for $\mathcal{H}$, that are distinguished by the parity of the spin structure.

If the zeroes are not of even type, denote respectively by $\mathcal{C}^{odd}, \mathcal{C}^{even}$ the connected components of $\mathcal{H}$ that are adjacent respectively to the two components of $\mathcal{H}_{min}$. 
By the same argument as in $(1)$, using Proposition~\ref{join:cc} we have $\mathcal{C}^{odd}=\mathcal{C}^{even}$.

\item Assume that the poles are neither of hyperelliptic nor of even type, then the minimal stratum is connected, so $\mathcal{H}$ is connected.
\end{enumerate}

It remains to prove the theorem when $g=2$ and $\sum_{i} p_i=2$. The minimal stratum has two connected components. In this case, it is equivalent to say that the  zeroes are of hyperelliptic type or to say that they are of even type. If $\mathcal{H}=\mathcal{H}(2,2,*)$ or $\mathcal{H}(4,*)$, the stratum has at least two components, so exactly two. 
Otherwise, the stratum is adjacent to $\mathcal{H}(3,1,*)$, which connects $\mathcal{H}_{min}^{odd}$ to $\mathcal{H}_{min}^{even}$, hence $\mathcal{H}$ is connected.
\end{proof}

\appendix
\section{Negative results for meromorphic differentials}
In this section, we quickly give some examples to show that many well known results for the dynamics on translation surfaces are false in the case of translation surfaces with poles.
\subsection{Dynamics of the geodesic flow}
On a standard translation surface, the geodesic flow is uniquely ergodic for almost any directions. From the result of Proposition~\ref{no:vertical}, for almost any direction on a translation surface with poles, all infinite orbits for the geodesic flow converge to a pole.

\subsection{Cylinders and closed geodesics}
On a standard translation surface, always exists infinitely many closed geodesics (hence cylinders). For the case of translation surface with poles, one can consider the following example. Take the plane $\mathbb{C}$ and remove the inside of a square, and glue together by translation the corresponding opposite sides. One gets a surface in $\mathcal{H}(-2,2)$. It is easy to see that there are exactly two saddle connections joining the conical singularity to itself and no closed geodesic. A similar example in $\mathcal{H}(-2,1,1)$ obtained by removing a regular hexagon gives an example without a single saddle connection joining a conical singularity to itself.

\subsection{$SL_2(\mathbb{R})$ action}
 The $SL_2(\mathbb{R})$ action on the strata of the moduli space of Abelian differentials is ergodic. It is not the case for the moduli space of meromorphic differentials if we consider the (infinite) volume form defined by the flat local coordinates. Indeed, consider the stratum $\mathcal{H}(-2,2)$, which is connected. Consider the set of surfaces obtained with  the infinite zippered rectangle construction, by gluing together the set $D^+(z_1,z_2)$ and the set $D^-(z_2,z_1)$. It is easy to see that if $Im(z_2)<0<Im(z_1)$, there are no cylinders on the surface while if $Im(z_2)>0>Im(z_1)$, there is a cylinder on the surface. These two cases form two nointersecting open subsets of $\mathcal{H}(-2,2)$. Considering $SL_2(\mathbb{R})$ orbits, we obtain two disjoints $SL_2(\mathbb{R})$-invariants open subsets of a connected stratum.

\nocite*
\bibliographystyle{plain}
\bibliography{biblio}

\end{document}